\def\Z{{\mathbb Z}}
\def\SL{{\rm SL}}
\def\GL{{\rm GL}}
\def\P{{\mathbb P}}
\def\Disc{{\rm Disc}}
\def\disc{{\rm disc}}
\def\R{{\mathbb R}}
\def\F{{\mathbb F}}
\def\Q{{\mathbb Q}}
\title[Thue equations failing the integral Hasse principle]{A positive proportion of Thue equations \\ fail the integral Hasse principle}
 \author{Shabnam Akhtari and Manjul Bhargava}
\address{ Fenton Hall\\
University of Oregon\\
Eugene, OR 97403-1222 USA}
 \email {akhtari@uoregon.edu}
 \address{\noindent Fine Hall, Washington Road\\
Princeton NJ 08544-1000 USA}
 \email{bhargava@math.princeton.edu}
\subjclass[2010]{}
\keywords{Binary forms, Thue equations and inequalities, polynomial congruences, Hasse principle}
\begin{document}

 \newtheorem{thm}{Theorem}[section]
\newtheorem{prop}[thm]{Proposition}
\newtheorem{lemma}[thm]{Lemma}
\newtheorem{cor}[thm]{Corollary}
\newtheorem{conj}[thm]{Conjecture} 

\begin{abstract}
For any nonzero $h\in\Z$, we prove that a positive proportion of integral binary cubic forms $F$ do locally everywhere represent $h$ but do not  globally represent $h$; that is, a positive proportion of cubic Thue equations $F(x,y)=h$ fail the integral Hasse principle.  Here, we order all classes of such integral binary cubic forms $F$ by their absolute discriminants. We prove the same result for Thue equations $G(x,y)=h$ of any fixed degree $n \geq 3$, provided that these integral binary $n$-ic forms $G$ are ordered by the maximum of the absolute values of their coefficients.
\end{abstract}

\maketitle

\section{Introduction}\label{Intro}

Fix $h\in\Z$. The purpose of this article is to prove the existence of many cubic {\it Thue equations}
\begin{equation}
F(x,y)=h
\end{equation}
that have no solutions $(x,y)$ in the integers; here $F$ is a binary cubic form of nonzero discriminant 
with coefficients in the integers.

It is easy to construct such forms if we arrange a local obstruction. For example, any binary cubic form congruent to 
$xy(x+y)$  modulo $2$  will never represent an odd integer $h$.  Also, if  $F(x,y)=h$ has no solution, and $G$ is a {\it proper subform} of $F$, i.e., $G(x,y)=F(ax+by,cx+dy)$ for some integer matrix $A=\bigl(\begin{smallmatrix}a&b\\c&d\end{smallmatrix}\bigr)$ with $|\!\det A|>1$, then clearly $G(x,y)=h$ will also have no integer solutions.  

Our goal  in this paper is to show for every integer $n \geq 3$ that many (indeed, a positive proportion) of binary forms  of degree $n$ are not proper subforms, locally represent $h$ at every place, but globally fail to represent~$h$.

\begin{thm}\label{maincubic}
Let $h$ be any nonzero integer. When integral binary cubic forms $F(x , y) \in \mathbb{Z}[x , y]$ are ordered by absolute discriminant, a positive proportion of the $\GL_2(\Z)$-classes of these forms $F$ have the following  properties:
\begin{enumerate}[{\rm (i)}]
\item they locally everywhere represent $h$ $($i.e., $F(x , y) = h$ has a solution in~$\R^2$ and in~$\Z_p^2$ for all $p);$
\item  they globally do not represent $h$ $($i.e., $F(x , y) = h$ has no solution in~$\mathbb{Z}^2);$ and
\item  they are maximal forms $($i.e., $F$ is not a proper subform of any other form$).$
\end{enumerate}
\end{thm}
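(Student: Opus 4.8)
The plan is to establish (i)--(iii) simultaneously by a first-moment (averaging) argument. I will show that the number of $\GL_2(\Z)$-classes of forms that are maximal and everywhere locally represent $h$ grows like a positive multiple of $X$ (where $X$ bounds $|\Disc F|$), while the number of classes that \emph{globally} represent $h$ is $o(X)$. Subtracting, a positive proportion of the maximal, everywhere-locally-soluble classes must fail to represent $h$ globally, which is exactly (i)--(iii). Throughout I use Davenport's result that the number of $\GL_2(\Z)$-classes with $0<|\Disc F|<X$ is asymptotic to a positive constant times $X$, so that ``positive proportion'' means ``$\gg X$''.

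The heart of the argument, and the step I expect to be hardest, is the upper bound $o(X)$ for the globally representing classes. First I reduce to primitive representations: if $F(x_0,y_0)=h$ with $\gcd(x_0,y_0)=g$, then writing $(x_0,y_0)=g(x_1,y_1)$ gives $F(x_1,y_1)=h/g^3$, so $g^3\mid h$ and there are only finitely many values $h/g^3$ to treat. It therefore suffices to bound, for each fixed nonzero integer $m$, the number of classes of forms primitively representing $m$. Consider pairs $(F,v)$ with $v\in\Z^2$ primitive and $F(v)=m$, on which $\GL_2(\Z)$ acts by $\gamma\cdot(F,v)=(F\circ\gamma^{-1},\gamma v)$. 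Since $v$ is primitive, every orbit has a representative with $v=(1,0)$, i.e.\ a form $m x^3+b x^2y+c xy^2+d y^3$ of leading coefficient $m$, and the residual symmetry is the stabilizer of $(1,0)$, namely the matrices $\bigl(\begin{smallmatrix}1&k\\0&\pm1\end{smallmatrix}\bigr)$. These shift $b\mapsto b+3mk$ (up to sign), so I may assume $0\le b<3|m|$. The crucial gain is that fixing the leading coefficient $m\neq 0$ removes all cusp degeneration, leaving a clean lattice-point count in $\{(c,d):|\Disc(m x^3+b x^2y+c xy^2+d y^3)|<X\}$. Since $\Disc$ is cubic in $c$ and quadratic in $d$ with leading behavior $-4m c^3-27m^2 d^2$, this region has $|c|\ll X^{1/3}$ and $|d|\ll X^{1/2}$, giving $O(X^{5/6})$ points. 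The main technical obstacle is making this estimate \emph{uniform} and controlling the thin unbounded neighborhood of the discriminant-zero locus (forms with a repeated linear factor), along which $c,d$ can be large; one checks this locus is parametrized by $c\sim -3m\xi^2$, $d\sim 2m\xi^3$ and stays inside the box $|c|\ll X^{1/3}$, so it contributes no extra main term. Summing over the finitely many $m=h/g^3$ and the bounded range of $b$, the number of globally representing classes is $O(X^{5/6})=o(X)$.

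For the lower bound I must show a positive proportion of classes are simultaneously maximal and everywhere locally soluble. Local solubility at the real place is automatic: a binary cubic form is odd and homogeneous, hence surjective onto $\R$, so $F(x,y)=h$ always has a real solution. At each finite prime $p$ the set of $F$ with a solution in $\Z_p^2$ has positive $p$-adic density $\delta_p(h)$, equal to $1$ up to $O(p^{-2})$ for $p\nmid h$, so $\prod_p\delta_p(h)>0$; maximality is likewise cut out by a convergent product of positive local densities (Davenport--Heilbronn). Since at every $p$ the local maximality and local solubility conditions are compatible, a standard sieve shows the set of maximal, everywhere locally soluble classes has positive density, i.e.\ is $\gg X$.

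Combining the two bounds completes the proof: the maximal, everywhere-locally-soluble classes number $\gg X$, while those among them globally representing $h$ number $O(X^{5/6})=o(X)$, so $\gg X$ of them satisfy (i), (ii) and (iii) at once. The only genuinely delicate ingredients are the uniform $o(X)$ count of leading-coefficient-$m$ forms of bounded discriminant and the sieve justifying the positive local density; everything else is bookkeeping.
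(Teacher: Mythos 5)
Your lower-bound half (maximal plus everywhere locally soluble classes have positive density) is essentially sound and parallels what the paper does in Subsections~4.4--4.5 via Lemma~4.2 and Davenport--Heilbronn. But the upper-bound half---the claim that the classes globally representing $h$ number $O(X^{5/6})=o(X)$---contains a fatal gap, and it is located exactly where you said the difficulty would be. After fixing the leading coefficient $m$ and reducing $b$ modulo $3m$, the region $\{(c,d): 0<|\Disc(mx^3+bx^2y+cxy^2+dy^3)|<X\}$ is \emph{not} contained in the box $|c|\ll X^{1/3}$, $|d|\ll X^{1/2}$. The two leading terms $-4mc^3$ and $-27m^2d^2$ cancel when $mc<0$, so the region contains an unbounded cusp: a thin neighborhood of the discriminant-zero curve $(c,d)\approx(-3m\xi^2,\,2m\xi^3)$, which runs off to infinity as $\xi\to\infty$ rather than ``staying inside the box $|c|\ll X^{1/3}$'' as you assert. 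Counting integer points with $\Disc\neq 0$ in that cusp is precisely the hard open problem here: taking $m=1$, $b=0$, it amounts to counting pairs $(c,d)$ with $0<|4c^3+27d^2|<X$, i.e.\ integer points on the family of Mordell curves $27d^2=-4c^3+k$ over all $0<|k|<X$. Unconditionally, the best bounds on the size of such points are exponential in $|k|$ (Baker-type), and even assuming Hall's conjecture the standard ``integer points near a curve'' estimates do not yield $o(X)$ for this count. Note also the sanity check built into the paper itself: your argument, if correct, would show that the classes representing $h$ have density zero, hence that $100\%$ of locally soluble classes fail the Hasse principle---exactly the statement the authors call a \emph{folk conjecture} in the introduction. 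That they prove only a positive proportion, by an elaborate construction, signals that the clean $o(X)$ count is not available.

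This is also what makes the comparison with the paper instructive: the authors never attempt to bound the number of classes representing $h$ among \emph{all} classes of bounded discriminant. Instead they bound the number of representations of a single cleverly chosen value $hm$ by a single form $F$ of large discriminant (Theorem~2.1, from \cite{AkhQuaterly}), and then use the Bombieri--Schmidt descent to spread those at most $34$ solutions across $3^k$ explicitly constructed, pairwise inequivalent forms $G_j$ of equal discriminant, so that at least $3^k-34$ of the $G_j$ represent nothing. The counting input they need (Davenport--Heilbronn with congruence conditions) is only ever applied to the forms $F$, where the cusp problem is handled by the group action over the whole space of forms---never to the ``fixed leading coefficient'' slice where your lattice count breaks down. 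If you want to salvage your strategy, you would need an unconditional $o(X)$ bound for monogenic (more generally, $m$-monogenized) cubic rings ordered by discriminant, which is not currently known.
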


\noindent
In other words, a positive proportion of cubic Thue equations $F(x,y)=h$ fail the integral Hasse principle, when classes of (maximal or all) integral binary cubic forms $F$ are ordered by absolute discriminant. 

More precisely, let $N_h(3,X)$ denote the number of $\GL_2(\Z)$-classes of integral binary cubic forms having absolute discriminant less than $X$ that are maximal, locally represent $h$, but do not globally represent $h$; and let $N(3,X)$ denote the total number of $\GL_2(\Z)$-classes of integral binary cubic forms having absolute discriminant less than~$X$.  Then, for any nonzero integer $h$, we prove that 
\begin{equation}
\liminf_{X\to\infty} \frac{N_h(3,X)}{N(3,X)}>0.
\end{equation}

In fact, our method of proof provides a lower bound for the positive density of classes of binary cubic forms that locally represent but do not globally represent a given integer $h$, and indeed an explicit construction of this positive density of forms. 

We prove an analogous result for binary forms of general degree, provided that these forms are ordered by the maximum of the absolute values of their coefficients.

\begin{thm}\label{maingeneral}
Let $h$ be any nonzero integer. When binary forms  $F(x , y) \in \mathbb{Z}[x , y]$ of a given degree $n \geq 3$ are ordered by the maximum of the absolute values of their coefficients, a positive proportion of them
have the following  properties:
\begin{enumerate}[{\rm (i)}]
\item they locally everywhere represent $h$ $($i.e., $F(x , y) = h$ has a solution in~$\R^2$ and in~$\Z_p^2$ for all $p);$
\item  they globally do not represent $h$ $($i.e., $F(x , y) = h$ has no solution in~$\mathbb{Z}^2);$ and 
\item  they are maximal forms $($i.e., $F$ is not a proper subform of any other form$).$
\end{enumerate}
\end{thm}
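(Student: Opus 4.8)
The plan is to prove Theorem~\ref{maingeneral} by comparing densities. Any $F$ that globally represents $h$ also represents it everywhere locally, so the forms sought are exactly the everywhere-locally-soluble maximal forms that are not globally soluble. Ordering the forms $F=\sum_{i=0}^n a_ix^{n-i}y^i$ by height $H(F)=\max_i|a_i|$, so that $\#\{F:H(F)<X\}\asymp X^{n+1}$, it therefore suffices to show (a) that the everywhere-locally-soluble maximal forms have positive density, and (b) that the globally soluble forms have density zero. Subtracting (b) from (a) then leaves a positive proportion enjoying properties (i)--(iii).

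For (b), let $N_{\mathrm{glob}}(X)$ count the forms with $H(F)<X$ for which $F(x,y)=h$ has an integer solution, and bound it by a union bound over solutions. Since a solution $(x,y)=d(x',y')$ with $(x',y')$ primitive forces $d^n\mid h$, I may restrict to primitive $(x,y)$. Fixing such a pair with $M=\max(|x|,|y|)=|x|$, say, the equation $F(x,y)=h$ is a single linear condition on $(a_0,\dots,a_n)$; solving for $a_0$ leaves the congruence $\sum_{i\ge 1}a_ix^{n-i}y^i\equiv h\pmod{x^n}$, whose $x$-adic expansion (legitimate since $\gcd(x,y)=1$ makes $y$ a unit modulo $x$) determines each of $a_1,\dots,a_n$ modulo $x$. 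Hence $\#\{F:H(F)<X,\ F(x,y)=h\}\ll (X/M+1)^n$. As the number of primitive pairs with $\max=M$ is $\asymp M$, the range $M\le X$ contributes $\ll X^n\sum_{M\le X}M^{1-n}\ll X^n$, the series converging precisely because $n\ge 3$, and this is $o(X^{n+1})$.

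The main obstacle is the range of \emph{large} solutions, $M>2X$, where the bound above degenerates to $O(1)$ per pair and the resulting sum diverges: the lattice-point count is now $0$ or $1$, and whether it equals $1$ is exactly the arithmetic ``does a solution exist'' question that is invisible to volume estimates. To control this I would use that when $M>2X$ the interval $[-X,X]$ contains at most one representative of each residue class modulo $x$, so the candidate form is \emph{unique} and completely determined by $(x,y)$; the genuine constraint is that all of its coefficients actually lie in $[-X,X]$, which amounts to $n$ essentially independent ``small target'' conditions each holding with probability $\asymp X/M$. Summing these honest probabilities over primitive pairs (dyadically in $M$) yields a contribution $\ll X^n\sum_{M>2X}M^{1-n}\ll X^2=o(X^{n+1})$. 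Turning the divergent trivial bound into this convergent product of small probabilities is the technical heart of the degree-$n$ estimate; together with the previous paragraph it gives $N_{\mathrm{glob}}(X)=o(X^{n+1})$.

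For (a), both everywhere-local-solubility and maximality are local conditions of positive density with summable complementary densities, and, being unions of congruence classes, they may be imposed simultaneously. For solubility, Euler's identity $xF_x+yF_y=nF$ shows the affine curve $F=h$ is \emph{smooth} for every $h\ne 0$, so a single $\F_p$-point lifts by Hensel's lemma; since $\#\{(x,y)\in\F_p^2:F(x,y)=h\}$ has mean $p-O(1)$ and variance $O(p)$ as $F$ varies, the forms with no such point are rare, giving $1-\rho_p\ll p^{-2}$ for large $p\nmid nh$ (while $\rho_\infty,\rho_p>0$ at the remaining places), whence $\prod_v\rho_v>0$. For maximality, a form is a proper subform at $p$ only if, in one of the $p+1$ directions of $\P^1(\F_p)$, its coefficients satisfy $p^i\mid a_i$, a set of density $\ll p\cdot p^{-n(n+1)/2}\ll p^{-2}$ for $n\ge 3$, so the maximal forms also have positive density. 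A uniform sieve, justified by the power-saving error terms for counting forms in a box along congruence classes, then lets one pass to the infinite product, establishing (a) and hence Theorem~\ref{maingeneral}.
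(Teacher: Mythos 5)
Your overall reduction is logically sound, but it replaces the theorem by a strictly harder --- and in fact open --- problem. Step (b), that globally soluble forms have density zero, is exactly the ``folk conjecture'' recorded in the introduction of this paper (density zero of global solubility is equivalent to saying that $100\%$ of locally soluble forms fail to represent $h$ globally), and your argument stops being a proof precisely at its hard point. Your treatment of the range $M\le X$ is correct: fixing the integers $a_n,a_{n-1},\dots$ one at a time, each successive coefficient is pinned down modulo $x$, giving $\ll (X/M+1)^n$ forms per primitive pair, and the resulting sum converges since $n\ge 3$. But in the range $M>2X$ the count per pair is $0$ or $1$, and what must be shown is that the number of primitive pairs with $\max(|x|,|y|)\asymp M$ whose \emph{unique} candidate coset representative happens to land in the box $[-X,X]^{n+1}$ is $\ll M(X/M)^n$, uniformly and summably over all scales $M>2X$. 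Calling the $n$ coefficient constraints ``essentially independent small target conditions each holding with probability $\asymp X/M$'' is a heuristic, not an argument: there is no measure with respect to which these are probabilities. The conditions are deterministic congruence-plus-size conditions (e.g.\ the residue $h\bar{y}^{\,n} \bmod x$ must lie in $[-X,X]$, and iteratively for the remaining coefficients), and the claimed bound would require quantitative \emph{joint} equidistribution of such residues over primitive pairs, with error terms summable over infinitely many dyadic ranges --- Kloosterman-sum or geometry-of-numbers input that you do not supply, and which is not available in the literature; this is why the statement is still a conjecture. By contrast, the paper never needs any rarity statement for global solubility: it uses the Bombieri--Schmidt reduction \cite{Bom} to manufacture, from each form $F$ that splits completely modulo $p_1,\dots,p_k$, a family of $n^k$ pairwise inequivalent forms $G_j$ such that the solutions of $F(x,y)=hm$ inject into the solutions of the equations $G_j(x,y)=h$; the uniform bound of Theorem~\ref{maineq} (at most $O(n)$ primitive solutions of $F=hm$ once $|\disc(F)|$ is large) then forces all but $O(n)$ of the $n^k$ forms $G_j$ to represent $h$ nowhere globally, and local solubility and positive density are arranged by explicit congruence conditions together with Ekedahl's sieve (Theorem~\ref{dav2}).

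There is also a quantitative error on the positive-density side (a). With mean $\asymp p$ and variance $O(p)$, Chebyshev gives only $1-\rho_p\ll p^{-1}$, not $\ll p^{-2}$; since $\sum_p p^{-1}$ diverges, this suffices neither to conclude $\prod_p\rho_p>0$ nor to pass from finitely many to infinitely many local conditions in the sieve. The correct statement is algebraic, and it is the one the paper uses: for large $p\nmid nh$, every affine $\F_p$-point of $F(x,y)=h$ is automatically smooth (Euler's identity, as you note), and such a point exists whenever $F\bmod p$ is \emph{not} a constant times an $r$-th power of a form for some $r>1$ dividing $n$ (geometric irreducibility plus the Hasse--Weil bound, as in Subsection on local solubility and Lemma~\ref{newlemmaonlocalsol}); the excluded locus has codimension two in the space of forms, hence density $O(p^{-2})$, which is summable and is exactly the shape of condition Ekedahl's sieve \cite{Eke} requires. (Alternatively, a fourth-moment computation, using that evaluations at any four pairwise non-proportional points are linearly independent functionals on forms of degree $n\ge 3$, would also give $1-\rho_p\ll p^{-2}$.) This part of your proposal is fixable; the gap in step (b) is not, short of proving the folk conjecture itself.
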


\noindent
In other words, a positive proportion of Thue equations $F(x,y)=h$ of degree $n$ fail the integral Hasse principle, when (maximal or all) integral binary $n$-ic forms~$F$ are ordered by the maximum of the absolute values of their coefficients.

More precisely, let $N^\ast_h(n,X)$ denote the number of integral binary $n$-ic forms whose coefficients are each less than $X$ in absolute value that are maximal, locally represent $h$, but do not globally represent $h$; and let $N^\ast(n,X)$ denote the total number of integral binary $n$-ic forms whose coefficients are each less than $X$ in absolute value.  Then, for any nonzero integer $h$, we prove that 
\begin{equation}
\liminf_{X\to\infty} \frac{N^\ast_h(n,X)}{N^\ast(n,X)}>0.
\end{equation}

As with Theorem~\ref{maincubic}, our method of proof provides an explicit lower bound for the positive density of binary $n$-ic forms that do not represent a  given integer, and moreover, yields an explicit construction of this positive density of forms.  

It is a folk conjecture that, for any degree $n\geq 3$, a density of  
100\% of integral binary $n$-ic forms that locally represent a fixed integer $h$ do not globally represent $h$
(either when classes of binary $n$-ic forms are ordered by absolute discriminant as in Theorem~\ref{maincubic}, or when binary $n$-ic forms are ordered by coefficient height as in Theorem~\ref{maingeneral}).  

We remark that for every fixed  even positive integer  $n$, one can deduce that a positive proportion of binary forms of degree $n$ do not represent a given integer $h$ by using the results in
\cite{hyperE}, where it is shown that most hyperelliptic curves $z^2=F(x,y)$ over $\mathbb{Q}$ (with any finite set of congruence conditions on the coefficients of the form $F(x,y)$) have no rational points. Indeed, when $n$ is even, the integer solutions to $F(x,y)=h$ may be viewed naturally as a subset of the rational points on the hyperelliptic curve defined by the equation $hz^2=F(x,y)$
in the weighted projective space $\mathbb P(1,1,n/2)$.  However, we note that the methods of \cite{hyperE} do not provide an explicit construction to produce such forms $F$.  

We also note that the techniques of \cite{hyperE} also currently apply only in the case that the degree~$n$ of $F$ is even.  
In particular, the results in, e.g., \cite{BG} and \cite{PS2} on ``odd degree'' hyperelliptic curves $z^2=x^n+a_1 x^{n-1}+\cdots+a_n$ do not apply to odd degree Thue equations $F(x,y)=h$ because, when the degree~$n$ of $F$ is odd, the integral solutions to $F(x,y)=h$ do not naturally map to rational points on the odd degree hyperelliptic curve $hz^2=F(x,1)$, unless $y=1$. 

We note that various interesting {\it parametric families} of Thue equations have been studied in the past.  The first study of a parametrized family of Thue equations in fact dates back to Thue himself~\cite{Thue}, and more recently occurs in the work of Thomas~\cite{Tho}.  We refer the reader to \cite{Heu} for a complete list of references up to the year 2005. Our goal in this article is of course to study the {\it universal family} of all Thue equations in each degree.  There do not appear to have been any general results on the universal family of Thue equations of a given degree in the literature previously, with the exception of the results in the case of even degree in~\cite{hyperE} described above. 

In~\cite{DM}, Dietmann and Marmon considered special diagonal Thue equations $ax^n+by^n=1$ ($n\geq 3$) and, under the ABC Conjecture, showed that most such Thue equations that are locally soluble fail the integral Hasse principle.  There have also been a number of recent works on families of non-Thue equations failing the integral Hasse principle, including that of Mitankin~\cite{Mit} on the density of failures of the integral Hasse principle among affine quadric surfaces, and of Jahnel and Schindler~\cite{JS} on examples of such failures among certain quartic del Pezzo surfaces. 
Many authors have considered such failures of the integral Hasse principle in terms of the integral Brauer--Manin obstruction, as introduced by Colliot-Th\'el\`ene and Xu~\cite{CTX}.  Theorems~\ref{maincubic} and~\ref{maingeneral} give a very classical family of affine equations (namely, Thue equations)---and to our knowledge the first known algebraic family of such equations of degree~$n$ at least~3---where a positive proportion of the equations are proven unconditionally to fail the integral Hasse principle. 

Our strategy to prove Theorems~\ref{maincubic} and \ref{maingeneral} is as follows.  
We make critical use of two important results on individual Thue equations.  The first result is due to Bombieri and Schmidt~\cite{Bom}, which in the form we shall need it, states that, given a squarefree integer $m$ relatively prime to~$h$ such that $F(x,y)$ factors completely into linear factors modulo $m$, the integer solutions of $F(x,y)=hm$ can be put in bijection with the integer solutions of $n^{\omega(m)}$ auxiliary Thue equations $G_j(x,y)=h$. (Here, $\omega(m)$ denotes the number of prime factors of $m$.)  The second result, proven by the first author~\cite[Theorem~1.2]{AkhQuaterly}, extending earlier works of Evertse--Gy\H{o}ry~\cite[Theorem 3]{EG} and Stewart~\cite[Theorem~1]{Ste},  states that a Thue inequality $|F(x,y)|\leq m$ of degree $n$ has at most $B$ solutions, where $B$ is a linear function of $n$, provided that the discriminant of $F$ is sufficiently large compared to $m$.  

Our method then is to play these results off of each other: if $n^{\omega(m)}$ is larger than $B$, and the discriminant of $F$ is sufficiently large relative to $hm$, so that $F(x,y)=hm$ has at most $B$ integer solutions, then most of the $n^{\omega(m)}$ auxiliary Thue equations $G_j(x,y)=h$ cannot have any integer solutions!  

If we can ensure that the Thue equations $G_j(x,y)=h$ are locally soluble at all places, then we will have produced many Thue equations $G_j(x,y)=h$ that are locally soluble but globally insoluble.  Thus, given $h\in\Z$, we suitably construct $F$'s and $m$'s to produce enough equations $G_j(x,y)=h$ that are locally soluble but globally insoluble, so that in fact the $G_j$'s form a positive proportion of all integral binary forms of degree $n$. 

Our method should likely apply to prove the analogues of Theorems~\ref{maincubic} and \ref{maingeneral}  over number fields, provided the analogues of the results of \cite{EG}, \cite{Ste}, or \cite{AkhQuaterly} used above could also be extended over number fields; the necessary techniques seem to exist in the literature, but as far as we know, this analogue over number fields has not yet been worked out.  (The constructions in this paper, and those in the work of Bombieri and Schmidt~\cite{Bom}, do hold without essential change over any number field.)

This paper is organized as follows.  In Sections \ref{EF} and \ref{PC}, we discuss the necessary preliminaries and essential results we need from \cite{AkhQuaterly} and \cite{Bom}, respectively.  We then turn to the proof of Theorem~\ref{maincubic} in Section~\ref{proofofmain}.  This consists of: (i) construction of the forms $F$ and integers $m$ when $h=1$; (ii) construction of the forms $G_j$; (iii) proof that many of the $G_j$'s do not represent 1; (iv) proof that many of the $G_j$'s do represent $1$ everywhere locally; (v) proof that the $G_j$'s yield a positive proportion of all integer binary cubic forms, up to equivalence, when ordered by absolute discriminant; and (iv) extension of the arguments to general $h$.  Finally, in Section~\ref{proofofmaingeneral}, we carry out the analogues of the arguments in Section~4 for Thue equations of general degree $n\geq 3$ ordered by height, thus proving Theorem~\ref{maingeneral}.

\section{Preliminaries}\label{EF}
 
Let $F(x , y) \in \mathbb{Z}[x , y]$ be an integral binary form and $
A = \bigl( \begin{smallmatrix}
a & b \\
c & d \end{smallmatrix} \bigr) \in \textrm{GL}_{2}(\mathbb{Q})$. Then define the binary form $F^{A}$ by
$$
F^{A}(x , y) : = F(ax + by ,\  cx + dy).
$$
Then, for $A,B\in \GL_2(\Q)$, we have $(F^A)^B(x,y)=F^{AB}(x,y)$.  Furthermore, if $(x_{0} , y_{0})$ is a   solution of $F(x , y) =h$, then 
$$A \left( \begin{array}{c}
x_{0}\\
y_{0}
\end{array} \right)= \left( \begin{array}{c}
ax_{0}+by_{0}\\
cx_{0}+dy_{0}
\end{array} \right),$$
and so  $(ax_{0}+by_{0} , cx_{0}+dy_{0})$ is a  solution of $F^{A^{-1}}(x , y) = h$.

If $F(x , y)$  factors in $\mathbb{C}$ as
$$
F(x , y) = \prod_{i=1}^{n} (\alpha_{i} x - \beta_{i}y), 
$$
then the discriminant $\textrm{disc}(F)$ of $F$ is given by
$$
\textrm{disc}(F) = \prod_{i<j} (\alpha_{i} \beta_{j} - \alpha_{j}\beta_{i})^2.
$$
It follows that, for any $2 \times 2$ matrix $A$, we have
\begin{equation}\label{St6}
\textrm{disc}(F^{A}) = (\textrm{det} A)^{n (n-1)} \textrm{disc}(F).
\end{equation}
If $A \in \GL_{2}(\mathbb{Z})$,  then we say that $F^{A}$ is {\it equivalent} to $F$.

A pair $(x_{0} , y_{0}) \in \mathbb{Z}^2$ is called a \emph{primitive solution} to the  equation $F(x , y) = h$ if  $F(x_{0} , y_{0}) = h$ and  
$\gcd(x_{0} , y_{0}) = 1$.
 We denote by $N_{F, h}$ the number of primitive solutions in integers $x$ and $y$ of the  equation $F(x , y) = h$. If $F_{1}$ and $F_{2}$ are equivalent,  then evidently $\textrm{disc}(F_{1})= \textrm{disc}(F_{2})$ and 
 $N_{F_{1}, h} = N_{F_{2}, h}$.

The following is an immediate consequence of the main results in \cite{AkhQuaterly}.
 
 \begin{thm}\label{maineq}
 Let $F(x , y) \in \mathbb{Z}[x , y]$ be an irreducible binary form of degree $n\geq 3$ and discriminant $D$.  Let $m$ be an integer with
  $$
0 < m  \leq \frac{|D|^{\frac{1}{2(n-1)} - \epsilon} }  {(3.5)^{n/2} n^{ \frac{n}{2(n-1) } } },
 $$
 where $ 0< \epsilon < \frac{1}{2(n-1)}$.
 Then the equation $F(x , y) = m$ has at most
  \[ \left\{
  \begin{array}{l l}
  7n + \frac{n}{(n-1) \epsilon}  & \quad \text{if}\, \,  n \geq 5\\
  9n + \frac{n}{(n-1) \epsilon}  & \quad \text{if} \, \, n = 3, 4 
  \end{array} 
 \right.  \]
   primitive solutions.  $($If $n$ is even, then two solutions $(x_{0}, y_{0})$ and $(-x_{0}, -y_{0})$ are considered here to be one solution$)$. 
   \end{thm}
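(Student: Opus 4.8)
The plan is to derive Theorem~\ref{maineq} directly from the explicit solution bounds of \cite[Theorem~1.2]{AkhQuaterly} (building on \cite{EG} and \cite{Ste}), which control the number of primitive solutions of the \emph{Thue inequality} $|F(x,y)| \le m$ whenever $\disc(F)$ is large relative to $m$. The first and essentially trivial observation is that every primitive solution of the \emph{equation} $F(x,y)=m$ is a fortiori a primitive solution of the \emph{inequality} $|F(x,y)|\le m$, since $|m|=m\le m$. Hence any upper bound valid for the latter is automatically valid for the former, and it suffices to bound the number of primitive solutions of $|F(x,y)|\le m$ under the stated hypothesis on $m$.

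Next I would unpack the hypothesis and match it to \cite{AkhQuaterly}. The displayed bound
\[
m \le \frac{|D|^{\frac{1}{2(n-1)}-\epsilon}}{(3.5)^{n/2}\, n^{\frac{n}{2(n-1)}}}
\]
is simply a rearranged form of the ``$\disc(F)$ sufficiently large relative to $m$'' condition of \cite{AkhQuaterly}: raising both sides to the reciprocal power, it is equivalent to a lower bound on $|D|$ by an explicit power of $m$, with the constants $(3.5)^{n/2}$ and $n^{\frac{n}{2(n-1)}}$ being exactly the normalizing factors arising from the Mahler-measure/discriminant comparison used there. I would confirm that the parameter $\epsilon$ here is the free parameter of \cite{AkhQuaterly} that interpolates the admissible size of $m$ against the permissible number of solutions, and that the restriction $0<\epsilon<\frac{1}{2(n-1)}$ is precisely what keeps the exponent $\frac{1}{2(n-1)}-\epsilon$ positive, so that the bound on $m$ is nonvacuous.

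The explicit count then comes from reading off the bound in \cite[Theorem~1.2]{AkhQuaterly}, which has the shape $c(n)+\frac{n}{(n-1)\epsilon}$: the term linear in $n$ bounds the ``small'' and ``medium'' solutions via a gap principle together with a lattice-point count in the region $|F|\le m$, while the term $\frac{n}{(n-1)\epsilon}$ bounds the ``large'' solutions through the $\epsilon$-dependence of the effective approximation exponent in the Thue--Siegel/hypergeometric method. One reads $c(n)=7n$ for $n\ge 5$ and $c(n)=9n$ for $n=3,4$, the split reflecting that the gap-principle counting of the lower range is slightly weaker in low degree. I expect the only real work to be bookkeeping rather than conceptual: verifying that the normalizations of $\disc(F)$ and of $m$ used in \cite{AkhQuaterly} agree with those here, and checking that the passage from $|F|\le m$ to $F=m$ is consistent with the stated convention for even~$n$ (where $(x_0,y_0)$ and $(-x_0,-y_0)$ both solve $F=m$ at once), which is exactly what makes the inequality count and the equation count agree.
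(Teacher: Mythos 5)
Your proposal matches the paper's treatment exactly: the paper states Theorem~\ref{maineq} as ``an immediate consequence of the main results in \cite{AkhQuaterly}'', i.e., the bound for the Thue inequality $|F(x,y)|\le m$ in \cite[Theorem~1.2]{AkhQuaterly} is applied verbatim, with the trivial observation that solutions of the equation $F(x,y)=m$ form a subset of solutions of the inequality. Your reading of the hypothesis on $m$, the role of $\epsilon$, and the $7n$ versus $9n$ split is consistent with that source, so no further argument is needed.
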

 We will use Theorem~\ref{maineq}, together with some results on polynomial congruences recalled in the next section, in our construction of binary forms that do not represent a given integer $h$.

\section{Polynomial congruences} \label{PC}

In this section, we describe two lemmas on the zeros of polynomials modulo prime powers, due to Bombieri and Schmidt \cite[Lemma~7]{Bom} and Stewart \cite[Theorem~2]{Ste}, which will also form key ingredients in our construction of forms that do not represent  a given integer $h$.  

To state these results, let $\Omega_{p}$ be an algebraic closure of the $p$-adic field~$\mathbb{Q}_{p}$. We denote by $|\cdot |_{p}$ the usual absolute value in $\mathbb{Q}_{p}$, together with an extension to $\Omega_{p}$. Here $\mathbb{Z}_{p}$ is the ring of integers of $\mathbb{Q}_{p}$ and $R_{p}$ denotes the ring of elements $\zeta$ in $\Omega_{p}$ for which $|\zeta|_{p}\leq 1$.  Then we have the following lemma  due to Stewart in \cite[Theorem~2]{Ste}.
\begin{lemma}\label{L7Bom}
Let $f(X)$ be a polynomial of degree $n$  with integer coefficients, content $1$ and nonzero discriminant $D$. Let $k$ be a positive integer,  $p$ be a prime number, and $s$ be the number of zeros of $f$ in $R_{p}$. Then the complete solution of the congruence 
$$
f(X) \equiv 0 \, \, ({\rm{mod}}\, \, p^k)
$$
is given by at most $s$ congruences
$$
X \equiv c_{i}\, \, ({\rm{mod}}\, \,  p^{k-u_{i}}),
$$
where $c_{i}$ are non-negative integers and $0 \leq u_{i} \leq k$. In addition, the above congruences can be found so that when $u_{i} \neq 0$, for each root $\alpha$ of $f(X)$, we have
\begin{equation}\label{Aim}
\left| \alpha - c_{i} \right|_{p} \geq p^{-k+u_{i}}.
\end{equation}
\end{lemma}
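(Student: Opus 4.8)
The plan is to convert the counting of integer solutions of $f(X)\equiv 0\ (\mathrm{mod}\ p^k)$ into a metric problem about $p$-adic distances from $x$ to the roots of $f$, and then to group the solutions according to the root of $f$ they are closest to. First I would rewrite the congruence as the inequality $|f(x)|_p\le p^{-k}$ and compute $|f(x)|_p$ for $x\in\Z_p$ from the factorization $f(X)=a_n\prod_{i=1}^n(X-\alpha_i)$ over $\Omega_p$. The decisive simplification comes from the content hypothesis: for each $p$ it says $\max_i|a_i|_p=1$, i.e.\ the Gauss norm of $f$ is $1$. By multiplicativity of the Gauss norm (Gauss's lemma), and since the Gauss norm of $X-\alpha_i$ is $\max(1,|\alpha_i|_p)$, we get $1=|a_n|_p\prod_{i=1}^n\max(1,|\alpha_i|_p)=|a_n|_p\prod_{\alpha_i\notin R_p}|\alpha_i|_p$. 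Because $|x-\alpha_i|_p=|\alpha_i|_p$ whenever $|x|_p\le 1<|\alpha_i|_p$ (the ultrametric inequality, as the two sizes differ), for every $x\in\Z_p$ this yields the clean identity
\[
|f(x)|_p=\prod_{\alpha_i\in R_p}|x-\alpha_i|_p .
\]
Hence only the $s$ roots lying in $R_p$ matter, and $x$ is a solution exactly when $\prod_{\alpha_i\in R_p}|x-\alpha_i|_p\le p^{-k}$.

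Next I would partition the solutions according to a closest root in $R_p$: to each solution $x$ assign a root $\alpha\in R_p$ minimizing $|x-\alpha|_p$, breaking ties arbitrarily. This produces at most $s$ groups, which already explains the bound ``at most $s$ congruences.'' For the group attached to a fixed $\alpha$, minimality gives $|x-\alpha_j|_p\ge|\alpha-\alpha_j|_p$ for every other root $\alpha_j\in R_p$ (again by the ultrametric inequality), so the solution inequality forces
\[
|x-\alpha|_p\le \frac{p^{-k}}{\prod_{\alpha_j\in R_p,\,\alpha_j\ne\alpha}|\alpha-\alpha_j|_p}.
\]
Writing the denominator as $p^{-w}$ with $w\ge 0$, this reads $|x-\alpha|_p\le p^{-(k-w)}$, so every solution in the group lies in a single ball of radius $p^{-(k-w)}$. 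Setting $u=\lfloor w\rfloor$ (capped at $k$), the integers in this ball form exactly one residue class modulo $p^{k-u}$, giving a congruence $X\equiv c\ (\mathrm{mod}\ p^{k-u})$ with $0\le u\le k$, as required.

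Finally I would arrange the distance condition \eqref{Aim}. The congruence only pins down $c$ modulo $p^{k-u}$, and this freedom must be spent to push $c$ away from the roots clustering near $\alpha$. Concretely I would select the representative $c$ so that $|\alpha'-c|_p\ge p^{-(k-u)}=p^{-k+u}$ for every root $\alpha'$: roots outside the class are automatically this far from any representative (the ultrametric makes their distance to $c$ equal to their distance to the whole class), while the roots inside the class are confined to the ball of radius $p^{-(k-u)}$ and occupy only finitely many sub-balls at the next finer level, so it suffices to choose $c$ in a sub-ball meeting no root and whose leading $p$-adic digit differs from those of the roots sitting exactly on the boundary sphere. \emph{This last selection is the main obstacle}: one must prove that a root-avoiding representative always exists, which requires bounding how many of the at most $n$ roots can collapse into one residue at the relevant level and exploiting the ramification structure of $\Omega_p$ to guarantee enough sub-balls. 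Once such a $c$ is fixed, the inequality $|\alpha'-c|_p\ge p^{-k+u}$ holds for all roots by construction, and \eqref{Aim} follows. (The boundary cases $s=0$, or $u=0$ where \eqref{Aim} is vacuous, are immediate from the same identity.)
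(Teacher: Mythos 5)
The paper never proves this lemma---it is quoted from Stewart \cite[Theorem~2]{Ste}---so your proposal has to stand entirely on its own, and it does not: there is a genuine gap in the main step, separate from the difficulty you flag at the end. The phrase ``the \emph{complete} solution of the congruence is given by at most $s$ congruences'' asserts an equality of sets: the solutions are exactly the integers satisfying one of the congruences $X\equiv c_i\ (\mathrm{mod}\ p^{k-u_i})$. (Under the weaker reading---that every solution satisfies some congruence---the lemma is nearly vacuous, since $u_i=k$ is allowed; moreover the companion Lemma~\ref{content}, which is what the paper actually uses to build the forms $\tilde F$, forces the equality reading, because $p^k$ dividing the content of $f(p^{k-u_i}X+c_i)$ implies that \emph{every} integer in the class $c_i \bmod p^{k-u_i}$ is a solution.) Your argument establishes only the covering direction. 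The reverse inclusion fails for your choice of radius: the bound $|x-\alpha|_p\le p^{-(k-w)}$, with $p^{-w}=\prod_{\alpha_j\ne\alpha}|\alpha-\alpha_j|_p$, is a \emph{necessary} condition on solutions attached to $\alpha$, but the residue class it determines can contain non-solutions as soon as $p^{-(k-w)}$ exceeds the distance from $\alpha$ to a neighbouring root.

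Concretely, take $p$ odd, $d\ge 2$, $f(X)=(X-a)^2-p^{2d}$ (integer coefficients, content $1$, discriminant $4p^{2d}\ne 0$, roots $a\pm p^d$, so $s=2$) and $k=2d-1$. The solution set of $f(X)\equiv 0\ (\mathrm{mod}\ p^{k})$ is exactly the single class $X\equiv a\ (\mathrm{mod}\ p^{d})$, i.e.\ $u=d-1$. Your construction gives $|\alpha-\beta|_p=p^{-d}$, hence $w=d$, $u=\lfloor w\rfloor = d$, and outputs the class $X\equiv a\ (\mathrm{mod}\ p^{d-1})$; but $x'=a+p^{d-1}$ lies in that class and $v_p(f(x'))=2d-2<k$, so $x'$ is not a solution. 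Equivalently, $f(p^{d-1}X+a)=p^{2d-2}X^2-p^{2d}$ has content $p^{2d-2}$, so Lemma~\ref{content} fails for your classes, and the paper's passage from $F(x,y)=m$ to the auxiliary forms would break down. (If you insist on degree $\ge 3$, multiply by $(X-a-1)$; nothing changes.) The repair is to define the radius attached to $\alpha$ not as $p^{-k}/\prod_{\alpha_j\ne\alpha}|\alpha-\alpha_j|_p$ but as the largest $\rho$ with $\rho\prod_{\alpha_j\ne\alpha}\max\bigl(\rho,|\alpha-\alpha_j|_p\bigr)\le p^{-k}$, the products running over roots in $R_p$: your Gauss-norm identity together with the ultrametric bound $|x-\alpha_j|_p\le\max\bigl(|x-\alpha|_p,|\alpha-\alpha_j|_p\bigr)$ then shows every integer in that ball is a solution, while for $x$ whose nearest root is $\alpha$ this bound is an equality, which gives the covering; in the example this produces the correct modulus $p^d$. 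Finally, your treatment of \eqref{Aim} is, as you yourself say, not a proof; that selection of $c_i$ still has to be carried out (it becomes manageable once the radii are corrected---e.g.\ $c_i=a$ works above), so that part of the lemma also remains unestablished.
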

Now suppose  $f(X)$ is a polynomial of degree $n$, content $1$, and nonzero discriminant. Let $p$ be a prime and $k$ a non-negative integer. By Lemma~\ref{L7Bom}, we can assume that the complete solution of the congruence 
$$
f(X) \equiv 0 \, \, (\textrm{mod}\, p^k)
$$
is given by $t$ congruences
$$
X \equiv c_{i}\, \, (\textrm{mod}\, p^{k-u_{i}}),
$$
where $c_{i}$ are non-negative integers and $0 \leq u_{i} \leq k$. Here $t$ does not exceed the number of zeros of $f$ in $R_{p}$ and we can order the above congruences so that $u_{i} > 0$ for $0 < i\leq t_{1}$ and $u_{i} = 0$ for $t_{1}< i \leq t$. 

Set
\begin{equation*}
f_{i}(X): = f(p^{k-u_{i}} X + c_{i}). 
\end{equation*}
Then we have the following lemma, which is a part of Theorem 2 of \cite{Ste}.
\begin{lemma}\label{content}
$p^{k}$ divides the content of 
$f_{i}(X)$,
for $i = 1, \ldots t$. 
\end{lemma}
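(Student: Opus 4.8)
The plan is to show directly that every coefficient of $f_i(X)$ is divisible by $p^k$, which is exactly the assertion that $p^k$ divides its content. Writing $f(c_i + Y) = \sum_{l=0}^{n} g_l\,Y^l$ for the Taylor expansion of $f$ about the representative $c_i$ of the $i$-th congruence class (this is the point denoted $b_i$ in the statement, and $g_l = f^{(l)}(c_i)/l! \in \Z$), the substitution $Y = p^{k-u_i}X$ shows that the coefficient of $X^l$ in $f_i(X) = f(p^{k-u_i}X + c_i)$ equals $g_l\,p^{(k-u_i)l}$. Hence it suffices to prove, for every $0 \le l \le n$, the estimate
\begin{equation*}
v_p(g_l) \ge k + (u_i - k)\,l,
\end{equation*}
where $v_p$ is the $p$-adic valuation normalized by $v_p(p)=1$ and extended to $\Omega_p$; indeed this immediately gives $v_p\bigl(g_l\,p^{(k-u_i)l}\bigr) = v_p(g_l) + (k-u_i)l \ge k$.

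First I would dispose of the two easy inputs. For $l=0$ the estimate reads $v_p(f(c_i)) \ge k$, which holds because $c_i$ lies in its own residue class modulo $p^{k-u_i}$, and that class is part of the complete solution set of $f \equiv 0 \pmod{p^k}$; thus $f(c_i) \equiv 0 \pmod{p^k}$. When $u_i = 0$ the remaining estimates are automatic, since for $l \ge 1$ the right-hand side $k + (u_i-k)l = k(1-l)$ is nonpositive while $g_l \in \Z$ forces $v_p(g_l) \ge 0$. So the entire content of the lemma is concentrated in the indices $1 \le i \le t_1$ with $u_i > 0$, and there I would bring in the separation property \eqref{Aim}.

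The key observation is that \eqref{Aim} controls the roots of the shifted polynomial $f(c_i+Y) = \sum_l g_l\,Y^l$, whose roots in $Y$ are exactly $\alpha_j - c_i$ as $\alpha_j$ runs over the roots of $f$. By \eqref{Aim}, for $u_i \neq 0$ we have $v_p(\alpha_j - c_i) \le k - u_i$ for every root $\alpha_j$; in particular no $\alpha_j$ equals $c_i$, so $g_0 = f(c_i) \neq 0$ and the Newton polygon of $\sum_l g_l\,Y^l$ genuinely begins at $(0, v_p(g_0))$. Since the slopes of this Newton polygon are the negatives of the root valuations $v_p(\alpha_j - c_i)$, its minimal slope is at least $-(k-u_i) = u_i - k$; convexity of the polygon then yields $v_p(g_l) \ge v_p(g_0) + (u_i-k)\,l$ for every $l$. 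Combining this with $v_p(g_0) \ge k$ from the $l=0$ step gives precisely $v_p(g_l) \ge k + (u_i-k)l$, completing the estimate.

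The one genuinely substantive step is the middle one: recognizing that \eqref{Aim}, which is an \emph{upper} bound on how $p$-adically close the roots can be to $c_i$, converts through the Newton polygon into the \emph{lower} bounds on coefficient valuations that the content statement demands. I expect this sign-reversing translation to be the main conceptual obstacle, whereas the Taylor-coefficient bookkeeping and the convexity inequality are routine. I would also remark at the outset that the content of $f_i$ is independent of the choice of representative of the class $c_i \bmod p^{k-u_i}$, since replacing $c_i$ by $c_i + p^{k-u_i}r$ merely shifts $f_i(X)$ to $f_i(X+r)$, so no generality is lost in fixing one representative.
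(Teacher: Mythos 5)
Your proof is correct, but note that the paper does not actually prove this lemma: it is quoted verbatim as ``a part of Theorem 2 of \cite{Ste}'', so the only comparison available is with Stewart's original argument (and the closely related Lemma 7 of \cite{Bom}). Your route---Taylor-expanding $f$ about $c_i$, reducing the content claim to the coefficient estimates $v_p(g_l) \ge k + (u_i-k)l$, and extracting these from the Newton polygon of $f(c_i+Y)$, whose slopes are bounded below by $u_i-k$ precisely because of \eqref{Aim}---is a complete, self-contained substitute. Stewart's argument reaches the same divisibility by factoring $f(X)=a_n\prod_j(X-\beta_j)$ over $\Omega_p$ and using multiplicativity of the Gauss norm: under \eqref{Aim} each linear factor $p^{k-u_i}X+(c_i-\beta_j)$ has Gauss norm exactly $|c_i-\beta_j|_p$, so the content norm of $f_i$ equals $|f(c_i)|_p\le p^{-k}$. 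This is the same $p$-adic input as your convexity step (the Newton polygon is just the additive form of Gauss-norm multiplicativity), so the two proofs are equivalent in substance; yours is perhaps more transparent about exactly where \eqref{Aim} enters, while Stewart's is shorter once Gauss's lemma for valued fields is invoked. Two things you handled that are genuinely load-bearing and worth emphasizing: (i) the left endpoint of the polygon, $v_p(f(c_i))\ge k$, comes from $c_i$ itself being a solution, and (ii) the reliance on \eqref{Aim} when $u_i>0$ is not a convenience but a necessity---for an arbitrary congruence description of the solution set the lemma is simply false. For instance, $f(X)=X^p-X$ with $k=1$ has solution set all of $\Z$, which is validly described (with $t=1\le s=p$, $u_1=1$) by the single congruence $X\equiv 0 \pmod{p^0}$; there $f_1=f$ has content $1$, not divisible by $p$, and indeed this description violates \eqref{Aim} at the root $\alpha=0$, exactly as your argument predicts it must.
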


Thus we can define  $t$ (with  $ t \leq s$) polynomials
$$
\tilde{f}_{i}(X) = p^{-k} f(p^{k-u_{i}} X + c_{i})
$$
with integer coefficients.

\section{Proof of Theorem \ref{maincubic} }\label{proofofmain}

Our strategy to prove Theorem \ref{maincubic} is as follows.  We first restrict to the case of $h=1$, and construct in Subsection~\ref{fconstruction} a set of maximal binary cubic forms $F(x,y)$ of positive density defined by congruence conditions.  For any such irreducible binary cubic form $F(x,y)$ of sufficiently large discriminant, we show in Subsection~\ref{gjconstruction} that each solution of $F(x,y)= m$, for an integer~$m$, corresponds uniquely to a solution of $G_j(x,y)=1$, where $G_j$ lies in a certain set of 81 maximal binary cubic forms $G_j(x,y)=1$ associated to~$F$ with $\Disc(G_j)\ll \Disc(F)$.  These $G_j$'s are constructed using the polynomial congruence results of Section~\ref{PC}.  Using the fact that $F$ can represent $m$ only an absolutely bounded number of times by Theorem~\ref{maineq} when $F$ has sufficiently large discriminant, we conclude in Subsection~\ref{gjno1} that most of the $G_j$'s cannot represent 1. 

In Subsection~\ref{locsol}, we argue that these forms $G_j$ that globally do not represent 1 do represent 1 everywhere locally.  In Subsection~\ref{posdens}, we show that the $G_j$'s yield a positive density of all binary cubic forms when ordered by absolute discriminant.  This proves Theorem~\ref{maincubic} in the case $h=1$.  
We explain how the proof extends naturally to general $h$ in Subsection~\ref{genh}.

We start by assuming $h=1$. 

\subsection{Construction of binary cubic forms $F(x,y)$ satisfying certain congruence conditions and having large discriminant}\label{fconstruction}

Let $p_1=5$, $p_2=7$, and in general $p_{i}$ the $i$-th prime greater than 3, and set
$$m = \prod_{i=1}^{k} p_{i}, $$
 where $k\geq 4$ is any integer. 
 Let $F(x , y)$ be a maximal primitive irreducible integral binary cubic form having splitting field with Galois group $S_3$ such that
 $$
 |\textrm{disc}(F)| > m^{20} (3.5)^{30} 3^{15}.
 $$
Assume further that $F(x,y)$ does not split completely modulo~2, does split completely modulo the primes $p_{1}$, \ldots, $p_{k}$, and is irreducible modulo~3. (We say that $F(x,y)$ {\it splits completely} {modulo} $p$ if it can be expressed modulo~$p$ as a product of pairwise non-proportional linear forms with coefficients in $\Z/p\Z$.) Finally, assume, for each prime $p>p_k$ with $p\equiv 1$ (mod $3$), that $F(x,y)$ does not factor as $cL(x,y)^3$ modulo~$p$ for any linear form $L$ and constant $c$ over $\F_p$. 
We will show in Subsection~\ref{posdens} that such forms $F$ 
give a positive proportion of all classes of binary cubic forms when ordered by absolute discriminant.

\subsection{Construction of forms $G_j$ associated to each $F$}\label{gjconstruction}

Given any form $F$ as constructed in Subsection~\ref{fconstruction}, we will show how to obtain many mutually inequivalent irreducible maximal binary cubic forms $G$ that locally everywhere represent~1 but do not globally represent~1.

First, because of the condition on the discriminant of $F$, 
we have by Theorem \ref{maineq} that the equation  
  $$
  F(x , y)= m
  $$ 
  has at most $34$  primitive solutions (take $\epsilon = \frac{1}{5}$).

Assume that
\begin{equation}\label{abcsplit}
F(x , y) \equiv m_{0} (x - a_{1}y) (x - b_{1}y) (x - c_{1}y) \, \, (\textrm{mod}\, \,  p_{1}),
\end{equation}
where  $m_{0} \not \equiv 0$ (mod $p_{1}$), and  $a_{1}, b_{1}$ and $c_{1}$ are three distinct integers modulo~$p_1$.
Following the method of Bombieri and Schmidt in~\cite{Bom}, define
\begin{eqnarray*}
F_{a_1}(x , y) &: =  & F(p_{1} x + a_{1}y , y),\\
F_{b_1}(x , y) &: =  & F(p_{1} x + b_{1}y , y),\\
F_{c_1}(x , y) & :=  & F(p_{1} x + c_{1}y , y).
\end{eqnarray*}
We claim that these three forms are pairwise inequivalent.  Indeed, any transformation $B\in\GL_2(\Q)$ taking $F_{a_1}(x,y)$ to $F_{b_1}(x,y)$ must be of the form $B=\bigl(\begin{smallmatrix}p_1&a_1\\ 0& 1\end{smallmatrix}\bigr)^{-1}\!A\bigl(\begin{smallmatrix}p_1&b_1\\ 0& 1\end{smallmatrix}\bigr)$, where $A\in\GL_2(\Q)$ stabilizes $F(x,y)$.  However, since $F(x,y)$ is irreducible and has Galois group $S_3$, the transformation $A$ must be the identity, and so $B=\bigl(\begin{smallmatrix}p_1&a_1\\ 0& 1\end{smallmatrix}\bigr)^{-1}\bigl(\begin{smallmatrix}p_1&b_1\\ 0& 1\end{smallmatrix}\bigr)$.  Thus $B\notin\GL_2(\Z)$, since $p_1\nmid (a_1-b_1)$, and so $F_{a_1}(x,y)$ and $F_{b_1}(x,y)$ are not $\GL_2(\Z)$-equivalent. 

By Lemma \ref{content}, or via direct substitution, we see that the contents of $F_{a_1}(x , y)$, $F_{b_1}(x , y)$ and $F_{c_1}(x , y)$ are exactly  divisible by $p_{1}$.  
Thus we may divide all the coefficients of $F_{a_1}$,  $F_{b_1}$ and  $F_{c_1}$ by $p_{1}$ to get three pairwise inequivalent binary cubic forms $\tilde{F}_{a_1}$,  $\tilde{F}_{b_1}$ and  $\tilde{F}_{c_1}$, with integral coefficients, content~$1$, and discriminant equal to~$p_{1}^2 \, \textrm{disc}(F)$.  

Since we assumed $F(x , y)$ splits completely modulo $p_{1}$, we either have \eqref{abcsplit} or
$$
F(x , y) \equiv m_{0} y (x - b_{1}y) (x - c_{1}y) \, \, (\textrm{mod}\, \,  p_{1}),
$$
for some integers  $m_{0} \not \equiv 0$  (mod $p_{1}$), $b_{1}$ and $c_{1}$, with $b_{1} \not \equiv c_{1}$ (mod $p_{1}$). In the latter case, we define $F_{b_{1}}$ and $F_{c_{1}}$ as above and set
$$
F_{\infty}(x , y) := F (p_{1}y , x)
$$
and 
$$
\tilde{F}_{\infty}(x , y) := \frac{1}{p_1}F (p_{1}y , x).
$$
Again, the unique element in $\GL_2(\Q)$ that transforms $F_{\infty}(x , y)$ to $F_{b_1}(x , y)$ is  $\bigl(\begin{smallmatrix}0&p_1\\ 1& 0\end{smallmatrix}\bigr)^{-1}\bigl(\begin{smallmatrix}p_1&b_1\\ 0& 1\end{smallmatrix}\bigr)$, and  this change-of-variable matrix does not belong to $\GL_{2}(\mathbb{Z})$.  Therefore,  $F_{\infty}(x , y)$, $F_{b_1}(x , y)$ and $F_{c_1}(x , y)$ are pairwise inequivalent.

By Lemma \ref{L7Bom},
 each  solution of  the equation $F(x , y)= m$ corresponds uniquely to a solution of one of 
 \begin{eqnarray*}
 \tilde{F}_{a_1} (x , y) &=& m/p_{1} ,\\
   \tilde{F}_{b_1} (x , y)& = & m/p_{1},\\
   \tilde{F}_{c_1} (x , y)  &=&  m/p_{1}.
\end{eqnarray*}
(Here we take $a_{1} = \infty$ if $F(x , y)$ is a multiple of $y$ modulo $p_{1}$.)

 Applying the above procedure, with the prime $p_2$ in place of $p_1$, to each of $ \tilde{F}_{a_1} (x , y)$,  $\tilde{F}_{b_1} (x , y)$ and  $\tilde{F}_{c_1} (x , y)$ (note that these forms still split completely modulo $p_2,\ldots,p_k$), we then obtain  $9$ different  cubic Thue equations of the shape
 $$
 K(x , y) = \frac{m}{p_{1} p_{2}},
 $$
where $K(x , y) \in \mathbb{Z}[x , y]$ has discriminant $p_1^2p_2^2\disc(F)$.

Applying the above procedure for each prime $p_{i}$, $i = 1, \ldots, k$, we then obtain 
$3^{k}$ binary cubic forms with integral coefficients, content $1$, and discriminant 
$$
\left(\prod_{i=1}^{k} p_{i}^2\right) \, \textrm{disc}(F).
$$
 We denote these binary forms $G_{j}$, for $j = 1, \ldots, 3^k$. 

Note that if $F(x , y)$ is  irreducible over $\mathbb{Q}$, then  its associated forms $G_{j} (x , y)$ will also  be irreducible over $\mathbb{Q}$.  Furthermore, the $G_{j}$'s are not constructed as proper subforms of the   binary form cubic $F(x , y)$, and in fact the $G_j$'s must be maximal forms as well.  Indeed, they are maximal over~$\Z_p$ for all $p\notin \{p_1,\ldots,p_k\}$ (being equivalent, up to a unit constant, to $F(x,y)$ over~$\Z_p$ in that case), while for $p\in\{p_1,\ldots,p_k\}$, we have $p\nmid \disc(F)$, implying $p^2||\disc(G_j)$, and so $G_j$ cannot be a subform over $\Z_p$ of any form by equation~(\ref{St6}). Hence the $G_j$'s are all maximal forms.  

 \subsection{Many of the $G_j$'s do not represent 1}\label{gjno1}

Let $F$ be as in Subsection~\ref{fconstruction}, and let the forms $G_j$, $j=1, \ldots, 3^k$, be constructed as in Subsection~\ref{gjconstruction}. 
 Each primitive solution $(x_{0} , y_{0})$ to the cubic equation 
 $$
 F(x , y) = m = \prod_{i=1}^{k} p_{i}
$$
 corresponds to a unique triple $(j, x_j, y_j)$ so that $(x_j, y_j)$ is a solution to the equation 
 $$
 G_{j}(x , y) = 1.
 $$
 Since the equation $F(x , y) = m$ has at most $34$ primitive solutions, at least $3^{k} -  34$ binary forms among the $G_{j}$'s cannot represent $1$.
 
Thus starting from any maximal irreducible binary cubic form $F(x,y)$ that splits completely modulo $p_1,p_2,\ldots,p_k$, we have produced at least $3^k-34$ maximal irreducible binary cubic forms $G(x,y)$ that do not represent~1. 

 Note that $k= 4$ suffices to produce a positive number of such forms $G(x,y)$: if $k=4$, then from such an $F(x,y)$ above we obtain $3^4 - 34 = 47$ binary cubic forms $G(x , y)$ such that $G(x , y) = 1$ does not have any solution in integers $x$ and $y$. 
 
 \subsection{The cubic forms $F$ and also the forms $G_j$ represent 1 everywhere locally}\label{locsol}

 Next we wish to show that these binary forms $G_{j}$ that do not globally represent 1 do represent $1$ everywhere locally.  
 
 Since any binary cubic form takes positive values, we conclude that each~$G_j$ certainly takes the value 1 over $\R$.  To see that the $G_j$'s represent~1 over~$\Z_p$ for all primes~$p$, 
  we use the following lemma. 
 
 \begin{lemma}\label{localrep1}
 A primitive binary cubic form $f$ over $\Z_2$ represents $1$ if and only if $f(x,y)\not\equiv xy(x+y)\pmod2$.
 If $p\equiv 1\pmod6$, then a primitive binary cubic form $f$ over $\Z_p$ represents $1$ if and only if 
 $f(x,y)\not\equiv c L(x,y)^3\pmod p$, where $L$ is a linear form and $c$ is a non-cube modulo $p$.  Finally, 
 if $p\equiv 5\pmod 6$, then every primitive binary cubic form $f$ over $\Z_p$ represents~$1$. 
 \end{lemma}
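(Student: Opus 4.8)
The plan is to reduce each case to a question about which values $\bar f := f \bmod p$ takes on $\P^1(\F_p)$, and then to use Hensel's lemma to lift a suitable mod-$p$ solution to an honest solution over $\Z_p$. The crucial input is that $p\neq 3$ in every case of the lemma: by Euler's identity $xf_x+yf_y=3f$, at any point where $\bar f\neq 0$ the gradient $(\bar f_x,\bar f_y)$ cannot vanish (since $3$ is a unit), so some partial derivative is a unit and Hensel's lemma applies to the one-variable restriction of $f$. Consequently, for odd $p$, the equation $f(x,y)=1$ is soluble over $\Z_p$ if and only if $\bar f$ attains the value $1$ at some nonzero point of $\F_p^2$; and since $f(\lambda x,\lambda y)=\lambda^3 f(x,y)$, rescaling by cubes shows this is equivalent to $\bar f$ representing \emph{some} nonzero cube of $\F_p^\ast$. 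The whole problem thus becomes: for which primitive binary cubics does $\bar f$ represent a nonzero cube?

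First I would dispose of the easy regimes. When $p\equiv5\pmod6$ we have $\gcd(3,p-1)=1$, so every element of $\F_p^\ast$ is a cube; a nonzero binary cubic has at most three projective roots while $|\P^1(\F_p)|=p+1\geq 6$, so $\bar f$ is nonzero, hence a cube, somewhere, and $f$ represents $1$. The case $p=2$ I would treat by a direct finite enumeration: evaluating the $15$ nonzero binary cubics over $\F_2$ at the three points of $\P^1(\F_2)$, exactly one, namely $xy(x+y)$, vanishes at all three (equivalently, has all of $\P^1(\F_2)$ as roots); every other form attains the value $1$, and the mod-$2$ Euler identity $xf_x+yf_y\equiv f\pmod 2$ guarantees a unit partial derivative there, so Hensel lifts the solution to $\Z_2$.

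The substantive case is $p\equiv1\pmod6$, where the cubes $C\subset\F_p^\ast$ form a subgroup of index $3$. The forward direction is immediate: if $\bar f=cL^3$ with $c$ a non-cube, then every nonzero value of $\bar f$ is $c$ times a cube, hence a non-cube, so $\bar f$ represents no cube and $f$ cannot represent $1$. For the converse I would classify $\bar f$ by its factorization type. If $\bar f$ has a repeated factor but is not of the exceptional shape, then it is either $aL^2M$ with $L\not\sim M$, which after a change of variables equals $ax^2y$ and represents every nonzero residue, or $aL^3$ with $a$ a cube, which represents the cubes; either way a cube is represented. If instead $\bar f$ is separable (three distinct rational linear factors, a linear factor times an irreducible quadratic, or an irreducible cubic), then the projective plane cubic $z^3=\bar f(x,y)$ is smooth of genus~$1$; fixing a cubic character $\chi$ on $\F_p^\ast$, the number $M$ of points $[x:y]\in\P^1(\F_p)$ at which $\bar f$ is a nonzero cube is $M=\tfrac13\bigl[(p+1-r)+T\bigr]$, where $r\leq 3$ counts the rational roots of $\bar f$ and $T=N-(p+1)$ is the deviation from $p+1$ of the number $N$ of $\F_p$-points on the cubic.

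I expect this separable subcase to be the main obstacle, as it is the only place needing a genuine point count: the Hasse bound $|T|\leq 2\sqrt p$ yields $M\geq\tfrac13(p-2-2\sqrt p)>0$ for every $p\geq 13$, but is inconclusive at the single prime $p=7$, which I would settle by directly checking that every separable binary cubic over $\F_7$ represents a nonzero cube. Everything else—the Euler--Hensel reduction to mod-$p$ cube representation, the two easy congruence classes $p\equiv5\pmod6$ and $p=2$, and the non-separable forms—is elementary once that reduction is in place.
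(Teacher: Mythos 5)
Your strategy is essentially the paper's own: reduce, via Hensel's lemma (valid since $p\neq 3$), to the question of whether $\bar f=f\bmod p$ represents a nonzero cube; dispose of $p\equiv 5\pmod 6$ and $p=2$ using the bijectivity of cubing on the relevant unit groups; handle repeated-root forms directly (the paper solves the linear system $x-ay=1$, $x-by=d$, producing the cube value $d^3$); and treat separable $\bar f$ by counting points on the smooth genus-one curve $z^3=\bar f(x,y)$ via the Hasse bound. Your identity $M=\tfrac13\bigl[(p+1-r)+T\bigr]$ is correct, and you have in fact spotted precisely the step that the paper's proof glosses over: at $p=7$ with $r=3$ the Hasse bound gives only $N\geq 3$, which does not force a point with $z\neq 0$.

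The gap is in your final step: the proposed repair, ``directly checking that every separable binary cubic over $\F_7$ represents a nonzero cube,'' fails because that assertion is false. Take $\bar f(x,y)=2xy(x+y)$ over $\F_7$, which is primitive and completely split ($r=3$). The nonzero cubes in $\F_7^\times$ are $\{1,6\}=\{\pm1\}$. On $\P^1(\F_7)$ one has $\bar f(1,t)=2t(t+1)$, which for $t\in\F_7$ takes only the values $\{0,3,4,5\}$, and $\bar f(0,1)=0$; rescaling by $\lambda^3\in\{\pm1\}$, the full value set of $\bar f$ on $\F_7^2$ is $\{0\}\cup\pm\{3,4,5\}=\{0,2,3,4,5\}$, which contains no nonzero cube. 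Equivalently, the smooth plane cubic $z^3=2xy(x+y)$ has exactly $3$ points over $\F_7$, all with $z=0$; its trace is $5\leq 2\sqrt 7\approx 5.29$, so this is perfectly consistent with the Hasse bound. Consequently no argument can close your gap: Lemma~\ref{localrep1} as stated is \emph{false} at $p=7$, since $2xy(x+y)$, viewed over $\Z_7$, is not congruent to $cL^3$ modulo $7$ yet does not represent $1$. The paper's own proof contains the same error (it invokes the Hasse bound uniformly in $p$, never checking that the count exceeds the number of points at $z=0$ when $p=7$). The paper's main theorems survive because every form to which the lemma is actually applied has a double root modulo $7$: either $7\in\{p_1,\ldots,p_k\}$, in which case $G_j\equiv cLy^2\pmod 7$, or $7\mid h$, in which case $F\equiv L_1L_2^2\pmod 7$; the double-root case of the argument is sound. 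The honest fix is to restate the lemma—for instance, restrict the $p\equiv1\pmod 6$ clause to $p\geq 13$, and at $p=7$ add the completely split forms in the $\GL_2(\F_7)$-orbit of $2xy(x+y)$ to the list of exceptions.
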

 
 \begin{proof}
 If $p\equiv 2$ (mod 3), then every unit in $\Z_p$ is a cube. Thus, in this case, every binary form $f$ over $\Z_p$ that represents a unit represents 1: indeed, if $f(x,y)=u$ for $(x,y)\in\Z_p^2$ and $u\in\Z_p^\times$, then $f(x/a,y/a)=1$, where $a^3=u$.  Now every primitive binary cubic form over $\Z_p$ represents a unit in~$\Z_p$, except when $p=2$ and  $f(x,y)\equiv xy(x+y)$ (mod~2).

It remains to treat the case $p\equiv 1$ (mod 3).  Let $c$ be a cubic non-residue modulo $p$. Then if $f(x,y)\equiv c (x - by)^3$ (mod~$p$), then clearly $f$ does not represent $1$ modulo $p$ (for any $b \in \mathbb{F}_p$). We show this is the only case where a local obstruction over $\Z_p$ will arise. 

Indeed, if $c$ is a cubic residue modulo~$p$, and $f(x,y)\equiv c(x - by)^3$ (mod~$p$) for some $b$, then $f$ evidently represents unit cubes, and therefore~1, over $\Z_p$. 

Next let $d$, $a$, and $b$ be three integers, with  $ d\not \equiv 0$   and  $a \not \equiv b$ (mod~$p$). Then
 the binary form 
 $$
 d (x -ay) (x - by)^2
 $$
 represents $1$ modulo $p$. This is easy to verify by solving the system of linear equations 
 \begin{eqnarray*}
 x - a y &= 1\\
 x - by &= d.
 \end{eqnarray*}
 Thus if a binary form $f(x,y)$ over $\Z_p$ has a double root but not a triple root modulo $p$, then it represents a unit cube and therefore 1.
 
 Finally, if a binary form $f(x , y)$ over $\Z_p$ does not have any multiple root modulo~$p$, then the curve
 $$
 z^3 = f(x , y) \pmod{p}
 $$
 over $\F_p$ is smooth of genus $1$; therefore by the Hasse bound, the above equation has solutions with $z \not \equiv 0$ (mod $p$). Thus $f(x , y)$ represents~$1$ modulo~$p$, so represents unit cubes and thus~1 over $\Z_p$. 
 \end{proof}
 
By the assumptions on how $F$ factors modulo each prime, and using the fact that each $G_j$ factors in a predictable way in relation to $F$ modulo every prime, we can then see that each $G_j$ locally represents 1.  Indeed, $F$ and thus each $G_j$ is not congruent to $xy(x+y)$ (mod 2); each $G_j(x,y)$ factors as $c\,  L(x,y)y^2$ for some constant $c$ modulo each prime $p\in\{p_1,\ldots,p_k\}$; and each $G_j$ does not factor modulo $p$ as a constant times the cube of a linear form for each prime $p > p_k$ for which $p\equiv 1$ (mod $3$).  By Lemma~\ref{localrep1}, we conclude that $F$ and the $G_j$'s all represent 1 over $\Z_p$ for all $p\neq 3$.  

For $p=3$, we note that the equation $z^3=G_j(x,y)$ has a solution over $\Z_3$ where $z$ is a unit, and thus $G_j(x,y)$ represents~1 over $\Z_3$ as well.  Finally, any nonzero binary cubic form over $\R$ represents positive values and thus represents~1 over $\R$; thus the forms $G_j$ also all locally represent 1 over $\R$.

In summary, we have the following theorem.

\begin{thm}\label{mainconstruction}
Let $p_{i}$ be the $i$-th odd prime greater than $3$ and set 
$m = \prod_{i=1}^{k} p_{i}$,
 where $k\geq 4$ is any integer. 
 Suppose $F(x , y)$ is a maximal primitive irreducible integral binary cubic form, with Galois group $S_3$ and discriminant greater than $m^{20} (3.5)^{30} 3^{15}$, such that $F(x,y)$ is irreducible modulo~$3$, does not split completely modulo $2$, but does split completely modulo the primes $p_{1},\ldots,p_{k}$.  Further assume for each prime $p>p_k$ with $p\equiv 1\pmod3$ that $F(x,y)$ does not factor as $cL(x,y)^3$ modulo~$p$ for any linear form $L$ and constant $c$ over $\F_p$. 
Given any such form $F$, there exist $3^k$ mutually inequivalent irreducible maximal binary cubic forms $G_{j}$, for $j =1, \ldots, 3^k$, having discriminant $\disc(F)m^2$ such that  at least $3^k-34$ of the forms~$G_j$ satisfy the following:
\begin{enumerate}[{\rm (i)}]
\item they locally everywhere represent $1;$
\item  they globally do not represent $1;$ and
\item  they are maximal forms.
\end{enumerate}\end{thm}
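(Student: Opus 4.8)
The plan is to assemble the pieces built in Subsections~\ref{gjconstruction}--\ref{locsol} into the single packaged statement. First I would produce the forms $G_j$ by iterating the Bombieri--Schmidt splitting procedure of Subsection~\ref{gjconstruction} across the primes $p_1,\ldots,p_k$ in turn: at the $i$-th stage, since the current form still splits completely modulo $p_i$, Lemma~\ref{L7Bom} furnishes three substitutions of the shape $(x,y)\mapsto(p_ix+\ast\, y,\,y)$ (with the degenerate substitution $(x,y)\mapsto(p_iy,x)$ in the missing-root case), and Lemma~\ref{content} guarantees that the content of each resulting form is exactly divisible by $p_i$, so dividing by $p_i$ returns an integral primitive form while the target descends from $m/(p_1\cdots p_{i-1})$ to $m/(p_1\cdots p_i)$. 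After $k$ stages this yields $3^k$ forms; the discriminant is tracked by combining the transformation law~\eqref{St6} (each substitution has determinant $p_i$, contributing a factor $p_i^{6}$) with the scaling $\disc(\lambda G)=\lambda^{4}\disc(G)$ from dividing by $p_i$, so that $\disc(F)$ is multiplied by $p_i^{2}$ at each stage, giving $\disc(G_j)=m^{2}\disc(F)$. Irreducibility and maximality follow as in Subsection~\ref{gjconstruction}: each $G_j$ is equivalent to $F$ over $\Z_p$ up to a unit for $p\notin\{p_1,\ldots,p_k\}$, while $p^2\,\|\,\disc(G_j)$ for $p\in\{p_1,\ldots,p_k\}$ forces maximality there by~\eqref{St6}. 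Pairwise inequivalence comes from the $S_3$ hypothesis, which makes the $\GL_2(\Q)$-stabilizer of $F$ trivial, so that any equivalence between two $G_j$ would have to be one of the explicit change-of-variable matrices of the construction, none of which lies in $\GL_2(\Z)$ because $p_i\nmid(a_i-b_i)$.

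The counting is the engine of the argument. The discriminant hypothesis $|\disc(F)|>m^{20}(3.5)^{30}3^{15}$ is exactly the inequality needed to invoke Theorem~\ref{maineq} in degree $n=3$ with $\epsilon=\tfrac15$, under which the solution count $9n+\frac{n}{(n-1)\epsilon}$ becomes $34.5$, i.e.\ at most $34$ primitive solutions of $F(x,y)=m$. Because the correspondence of Lemma~\ref{L7Bom} sends the primitive solutions of $F(x,y)=m$ bijectively onto the solutions of the equations $G_j(x,y)=1$ taken together, at most $34$ of the $G_j$ can represent $1$, so at least $3^k-34$ of them fail to represent $1$ globally.

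It remains to show that every $G_j$ represents $1$ at each place, and this local bookkeeping is where the factorization hypotheses on $F$ are consumed; I expect it to be the main obstacle, since one must control the reduction of each $G_j$ modulo every prime even though the construction applies non-unimodular substitutions at $p_1,\ldots,p_k$. Over $\R$, any nonzero binary cubic takes positive values and hence represents $1$. For $p=2$ and for every prime $p>p_k$, the change-of-variable matrices have unit determinant modulo $p$, so $G_j$ is, up to a unit scalar, a $\GL_2(\F_p)$-transform of $F$ modulo $p$; this transformation preserves both the property $G_j\not\equiv xy(x+y)\pmod2$ and the property of not factoring as a constant times the cube of a linear form, so Lemma~\ref{localrep1} yields representability of $1$ at $p=2$, at $p>p_k$ with $p\equiv1\pmod6$, and (with no condition needed) at $p>p_k$ with $p\equiv5\pmod6$. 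For $p\in\{p_1,\ldots,p_k\}$, substituting the construction into~\eqref{abcsplit} shows $G_j\equiv c\,L(x,y)\,y^2\pmod p$ with $L$ linear and a genuine double root, so the double-root case of Lemma~\ref{localrep1} applies. Finally, the irreducibility of $F$ modulo $3$ forces $3\nmid\disc(F)$, hence $3\nmid\disc(G_j)$, so $z^3=G_j(x,y)$ is smooth of genus $1$ over $\F_3$; the Hasse bound produces a point with $z\neq0$, whence $G_j$ represents a unit cube and therefore $1$ over $\Z_3$. Since the primes $p_1,\ldots,p_k$ are consecutive, these cases exhaust all places, and the at least $3^k-34$ forms $G_j$ that do not globally represent $1$ are thereby shown to represent $1$ everywhere locally while being maximal, as claimed.
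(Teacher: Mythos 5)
Your proposal is correct and follows essentially the same route as the paper: the iterated Bombieri--Schmidt substitutions with content division and the discriminant bookkeeping $p_i^6/p_i^4=p_i^2$, inequivalence via the trivial $\GL_2(\Q)$-stabilizer coming from the $S_3$ hypothesis, the count of at most $34$ primitive solutions from Theorem~\ref{maineq} with $\epsilon=\tfrac15$ played against the solution correspondence, and the local analysis via Lemma~\ref{localrep1} at $p\neq 3$ together with the curve $z^3=G_j(x,y)$ over $\F_3$ (including the same smoothness-plus-Hasse-bound step the paper itself uses at $p=3$). No substantive differences to report.
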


 \subsection{The forms $F$ and also the forms $G_j$ yield a positive proportion of all classes of forms}\label{posdens}
 
We now verify that the classes of the forms $F$ satisfying the conditions of Theorem~\ref{mainconstruction} yield a positive proportion of all classes of integral binary cubic forms when ordered by absolute discriminant.  For this, we use the following slight refinement of a theorem of Davenport and Heilbronn:

\begin{thm}[Davenport--Heilbronn~\cite{DH}]\label{dav} 
Let $S$ be a set of integral binary cubic forms $F$ defined by congruence conditions modulo bounded powers of primes $p$ where, for sufficiently large $p$, the defining congruence conditions at $p$ exclude only a set of forms $F$ satisfying $p^2\mid \Disc(F)$. 
Let $h(S;D)$ denote the number of classes of irreducible binary cubic forms with Galois group $S_3$ that are contained in $S$ and have discriminant $D$.  Then:
\begin{enumerate}[{\rm (a)}]
\item $\displaystyle{\sum_{-X <D < 0} h(S;D) \sim \frac{\pi^2}{24} X \prod_p\mu_p(S);}$
\item  $\displaystyle{\;\:\sum_{0 <D < X} \,h(S;D) \sim \frac{\pi^2}{72} X\prod_p\mu_p(S),}$
\end{enumerate}
where $\mu_p(S)$ denotes the $p$-adic density of $S$ in the space of integral binary cubic forms. 
\end{thm}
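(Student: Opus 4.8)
The plan is to derive both asymptotics from the classical Davenport--Heilbronn count by sieving over the congruence conditions defining $S$; the one new ingredient, relative to \cite{DH}, is the passage from finitely many to infinitely many primes.

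First I would recall the geometry-of-numbers count underlying \cite{DH}. Identifying integral binary cubic forms with the lattice $V_\Z$ in the four-dimensional real vector space $V_\R$ of binary cubic forms, the $\GL_2(\Z)$-classes of irreducible forms with $|\Disc|<X$ are counted by the points of $V_\Z$ in a fundamental domain for the $\GL_2(\Z)$-action truncated at $|\Disc|<X$; Davenport's lemma on lattice points in expanding regions yields the main terms $\tfrac{\pi^2}{24}X$ and $\tfrac{\pi^2}{72}X$ in the negative and positive discriminant ranges, with the cuspidal part of the domain together with the reducible and $C_3$ forms all contributing to lower order. For a threshold $Y$, let $S_Y$ be the set obtained by imposing the congruence conditions of $S$ only at primes $p\le Y$, so that $S_Y$ is cut out modulo $M=\prod_{p\le Y}p^{k_p}$. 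Running the same count over the residue classes modulo $M$ picked out by these conditions---each class contributing its natural proportion of the total, by the geometry of numbers---multiplies the main term by the local densities, giving
$$
\sum_{0<D<X} h(S_Y;D)\ \sim\ \frac{\pi^2}{72}\,X\prod_{p\le Y}\mu_p(S),
$$
and the analogous statement with $\tfrac{\pi^2}{24}$ for $-X<D<0$.

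Next I would verify that $\prod_p\mu_p(S)$ converges and reduce the theorem to a tail estimate. By hypothesis, for all large $p$ the conditions defining $S$ at $p$ exclude only forms with $p^2\mid\Disc(F)$; since the set $\{F:p^2\mid\Disc(F)\}$ is a union of residue classes modulo $p^2$ of density $O(1/p^2)$, we have $\mu_p(S)=1-O(1/p^2)$, so the Euler product converges absolutely. Because $S\subseteq S_Y$, the difference $h(S_Y;D)-h(S;D)$ counts classes that satisfy every condition at primes $p\le Y$ but violate a defining condition at some prime $p>Y$; for such $p$ this forces $p^2\mid\Disc(F)$. Hence the theorem will follow from the displayed finite-$Y$ asymptotic, the convergence of the product, and a standard squeeze, \emph{provided} the number of classes whose discriminant is divisible by $p^2$ for some $p>Y$ can be controlled.

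The crux, and the step I expect to be the main obstacle, is thus the uniform tail estimate
$$
\lim_{Y\to\infty}\ \limsup_{X\to\infty}\ \frac1X\,\#\bigl\{[F]:\ |\Disc(F)|<X,\ \exists\,p>Y\text{ with }p^2\mid\Disc(F)\bigr\}=0 .
$$
This is a squarefree-type sieve for discriminants of binary cubic forms, resting on a bound $\#\{[F]:|\Disc(F)|<X,\ p^2\mid\Disc(F)\}\ll X/p^2+E(X,p)$ in which the error $E(X,p)$ is summable over $p>Y$ and contributes $o(X)$. One may organize the count by separating the forms that are non-maximal at $p$ from those for which $p$ ramifies; the former occupy a sublattice and contribute uniformly, while the latter demand that the geometry-of-numbers estimates of \cite{DH} be made uniform in $p$---the delicate point. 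Once this uniformity is secured in both discriminant-sign ranges, the sieve closes and the two asymptotics follow with the full Euler product $\prod_p\mu_p(S)$.
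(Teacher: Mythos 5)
Your proposal reconstructs the Davenport--Heilbronn sieve from scratch, which is not what the paper does: the paper simply cites Davenport--Heilbronn for the congruence-condition asymptotics, and its entire proof consists of justifying the one phrase that is \emph{not} in the classical theorem, namely the restriction to forms with Galois group $S_3$. That is exactly the point your proposal glosses over. You assert in passing that ``the reducible and $C_3$ forms all contribute to lower order,'' but this assertion is not part of the classical count (Davenport's $\pi^2/24$ and $\pi^2/72$ asymptotics count \emph{all} irreducible classes) and is precisely what must be argued. For part (a) no estimate is needed at all: an irreducible integral binary cubic form of negative discriminant has a non-totally-real splitting field, so its Galois group is automatically $S_3$; there are simply no $C_3$ classes in that range. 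For part (b), the claim that $C_3$ classes are $o(X)$ when forms are ordered by \emph{discriminant} is a genuine theorem---Hilbert irreducibility by naive height does not directly apply under this ordering---and the paper disposes of it by invoking \cite[Theorem~3]{BY}, a quantitative Hilbert irreducibility theorem for binary cubic forms ordered by discriminant. Your proposal contains no argument for this step.

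Separately, within the part you do attempt to prove, the argument is incomplete at exactly the step you yourself flag as the crux: the uniform tail estimate $\#\{[F]:\ |\Disc(F)|<X,\ p^2\mid\Disc(F)\}\ll X/p^2+E(X,p)$ with error summable over $p>Y$. Saying ``provided this can be controlled'' and ``once this uniformity is secured'' leaves the sieve resting on an unproved lemma; this uniformity estimate is the heart of the Davenport--Heilbronn maximality sieve and is supplied in \cite{DH} (and in refined form in \cite{BST}), so a self-contained treatment would need to prove it or at least cite it precisely. In sum, the proposal expends its effort re-deriving the ingredient the paper takes as known, leaves that ingredient's hardest lemma open, and omits the $S_3$ refinement that is the actual content of the statement relative to the classical theorem.
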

Davenport and Heilbronn's original theorem does not include the condition ``with Galois group $S_3$''; however, this modification is immediate in case~(a) of the theorem, since an irreducible integral binary cubic form with only one real root in $\P^1$ automatically has Galois group $S_3$; meanwhile, in case (b), the result follows from \cite[Theorem~3]{BY}, which is a quantitative version of Hilbert's Irreducibility Theorem for binary cubic forms, when (classes of) binary cubic forms are ordered by discriminant rather than by coefficient height. 

Theorem~\ref{dav} thus
 states that the number of equivalence classes of irreducible binary cubic forms per discriminant (which then have $S_3$ as their Galois group $100\%$ of the time) is a positive constant on average, even if we are counting only those forms satisfying a finite (or even a suitable infinite) set of congruence conditions.  It is a theorem of Shintani~\cite{shintani} that the same is true also for reducible forms.
 
In particular, Theorem~\ref{dav} immediately implies that the set $S_0$ of all~$F$ satisfying the conditions of Theorem~\ref{mainconstruction} gives a positive proportion of all classes of integral binary cubic forms, when these classes are ordered by absolute discriminant. Indeed, we may determine the exact density of such~$F$ among all irreducible binary cubic forms as follows. 

First, we note that if a form is non-maximal, then either it is not primitive, or after an $\SL_2(\Z)$-transformation is of the form $f(x,y)=a_0x^3+a_1x^2y+a_2xy^2+a_3y^3$, where $p^i\mid a_i$, $i=1,2,3$, for some prime $p$.  The proportion of nonzero binary forms modulo~$p$ having a triple root in $\F_p$ is $(p+1)(p-1)/p^4$; furthermore, if an integral binary form having a triple root modulo~$p$ is expressed, via an $\SL_2(\Z)$-transformation, in the form $f(x,y)=a_0x^3+a_1x^2y+a_2xy^2+a_3y^3$ where $p$ divides $a_1$, $a_2$, and $a_3$, then the probability that $f(x,y)$ additionally satisfies $p^i\mid a_i$ for $i=1,2,3$ is $1/p^3$. Therefore, for primes $p>p_k$ such that $p\equiv 2$ (mod~3), the $p$-adic density of binary cubic forms  that are primitive and maximal at $p$ is
given by 
$$\mu_p(S_0)=1-\frac1{p^4}-\frac{p^2-1}{p^7}.$$

The 2-adic density of all binary cubic forms that are primitive, maximal, and not congruent to $xy(x+y)$ (mod 2) is $\mu_2(S_0)=1-1/2^4-3/2^6-1/2^4=53/64$.  
By \cite[Lemma~17]{BST}, 
the $p_i$-adic density (for $i=1,\ldots,k$) of all primitive forms that split completely modulo $p_i$ is 
$$\mu_{p_i}=\frac{1}{6} \frac{(p_i-1)^2 p_i (p_i+1)} {p_i^4}; $$
the 3-adic density of all binary cubic forms that are primitive and irreducible is 
$$\mu_{3}=\frac{1}{3} \frac{(3-1)^2 3 (3+1)} {3^4} = {16}/{81}.$$

For primes $p>p_k$ such that $p\equiv 1$ (mod 3), the density of binary cubic forms  that are primitive and not congruent to 
$cL^3$ modulo $p$ for some linear form $L$ (and thus automatically maximal) and some constant $c$ over $\F_p$ is given by 
$$\mu_p(S_0)=1-\frac{(p-1)(p+1)}{p^4}.$$

Therefore, the density of $F$ satisfying the conditions of Theorem~\ref{mainconstruction} among all classes of irreducible integral binary cubic forms, when these forms are ordered by absolute discriminant, is given by
$$
\frac{53}{64}\cdot \frac{16}{81} \cdot\prod_{i=1}^{k} \frac{1}{6} 
\frac{p_i^4-p_i^3-p_i^2+p_i}{p_i^4}
\!\!\!\!\!\prod_{{p>p_k}\atop{p\equiv 1\!\!\!\!\!\pmod 3}} \!\!\!\!\!\!\left(1-\frac{p^2-1}{p^4}\right) \!\!\!\!\!
\prod_{{p>p_k}\atop{p\equiv 2\!\!\!\!\!\pmod 3}} \!\!\!\!\!\!\left(1-\frac{p^3+p^2-1}{p^7}\right).
$$

In order  to show that the forms $G_j$ that we have constructed from these forms $F$, which are maximal, locally represent 1, but do not globally represent 1, yield a positive proportion of all classes of irreducible integral binary cubic forms, we note that the $G_j$'s are mutually inequivalent, and that each $F$ gives rise to at least 47 $G_j$'s (when $k=4$), each satisfying $$\disc(G_j)= \left(\prod_{i=1}^k p_i^2 \right)\disc(F).$$
We conclude that the density of the classes of our forms $G_j$ among all classes of irreducible integral binary cubic forms is given by
$$
47 \cdot \frac{53}{64}\cdot \frac{16}{81} \cdot\prod_{i=1}^{k} \frac{1}{6} 
\frac{p_i^4-p_i^3-p_i^2+p_i}{p_i^4}\frac1{p_i^2}
\!\!\!\!\!\prod_{{p>p_k}\atop{p\equiv 1\!\!\!\!\!\pmod 3}} \!\!\!\!\!\!\left(1-\frac{p^2-1}{p^4}\right) \!\!\!\!\!
\!\prod_{{p>p_k}\atop{p\equiv 2\!\!\!\!\!\pmod 3}} \!\!\!\!\!\!\left(1-\frac{p^3+p^2-1}{p^7}\right)
$$
when $k=4$, and we have proven Theorem~\ref{maincubic} in the case $h=1$.

\subsection{The case of general $h$}\label{genh}

We may handle general $h\in\Z$ in a similar manner. 
Let $k$ again be a positive integer such that $3^k > 34$, 
let $p_{1}, \ldots, p_{k}$ be  the $k$ smallest primes greater than  $3$ such that $p_{i} \nmid h$, and 
set 
$$m = \prod_{i=1}^{k} p_{i}. 
$$
 Let $F(x , y)$ be a binary cubic form with $$
 |\textrm{disc}(F)| > (hm)^{20} (3.5)^{30} 3^{15}.
 $$
  Then by Theorem \ref{maineq}, the equation  
  \begin{equation}\label{Fhm}
  F(x , y)= h m
  \end{equation}
  has at most $34$  primitive solutions.

  In addition to the assumptions on the binary cubic form $F$ that were made in Subsection~\ref{fconstruction}, we also assume    that for every prime $p \mid h$, we have
  $$
F(x , y) \equiv L_1(x,y) L_2(x , y)^{2}\, \, \textrm{ (mod}\, \,  p),
$$
where $L_1(x,y)$ and $L_2(x,y)$ are linear forms over $\F_p$ that are linearly independent.  

  Similarly to the case $h=1$, performing the construction in Lemmas~\ref{L7Bom} and \ref{content} as in Subsection~\ref{gjconstruction} using the primes $p_1,\ldots,p_k$, 
  we obtain  $3^k$ mutually inequivalent cubic Thue equations of the shape
   \begin{equation}\label{Ghm}
  G_{j}(x , y) = h,
  \end{equation}
  so that every solution $(x_{0}, y_{0})$ to the equation \eqref{Fhm} corresponds to a solution of one of the equations in \eqref{Ghm}, for $j \in \{1, \ldots, 3^k\}$.  
  
The extra assumption on the factorization of $F$ modulo prime factors of~$h$ ensures that the equations
$F(x , y) = 0$
and 
$G_{j}(x , y) = 0$
are solvable over $\mathbb{Z}_{p}$ for all primes $p\mid h$ as well. 
Indeed, given $p\mid h$, if $F(x , y) \equiv L_1(x,y) L_2(x , y)^{2}\, \, \textrm{ (mod}\, \,  p)$, then the curve $F(x,y)=h$ over $\F_p$ has a smooth point (namely, on the component $L_1(x,y)=0$); this smooth point will then lift to a $\Z_p$-point on $F(x,y)=h$. Thus the equations $F(x,y)=h$ and $G_j(x,y)=h$ will again be everywhere locally soluble.

    The rest of the proof is identical to the above proof for $h=1$.

\section{Proof of Theorem ~\ref{maingeneral}}\label{proofofmaingeneral}

 We turn next to Thue equations of general degree.  Since asymptotics for the number $\GL_2(\Z)$-classes of binary $n$-ic forms of bounded discriminant are not known for $n>3$ (see~\cite{BM}), in this section we instead order all integral binary $n$-ic forms by the maximum of the absolute values of their coefficients, but otherwise follow a strategy analogous to that of Section~\ref{proofofmain}.
 
 We again begin by restricting first to the case $h =1$. 
 
 \subsection{Construction of binary $n$-ic forms $F(x,y)$ satisfying certain congruence conditions and having large discriminant}\label{fconstruction2}
 
 Let $p_{i}$ be the $i$-th prime greater than or equal to  $n$ and set
 $$
 m = \prod_{i=1}^k p_{i},
 $$ 
 where $k$ is a positive integer to be chosen later. 
  Let $F(x , y)$ be a  maximal primitive irreducible integral binary form  of degree $n$ with Galois group $S_n$ such that
 $$
\left|\textrm{disc}(F)   \right| > (3.5)^{2n(n-1)} \,  n^{2n} \left( \prod_{i=1}^{k} p_{i} \right)^{4(n-1)}.
 $$
 
We assume that the leading coefficient of $F$ is positive, so that $F$ takes positive values.  Assume further that $F(x , y)$  splits completely modulo the primes $p_{1}$, \ldots, $p_{k}$.
Let $g=(n-1)(n-2)/2$ be the genus of the curve $z^n = F(x , y)$, and  assume that for every prime $p \not \in \{ p_{1}, \ldots , p_{k}\}$, with $ p < (2g + 1)^2$, we have
$$
F(x , y) \equiv L_1(x,y) L_2(x , y)^{n-1}\, \, \textrm{ (mod}\, \,  p^{1+v_p(n)+\delta_2(p,n)}),
$$
where $L_1(x,y)$ and $L_2(x,y)$ are linear forms that are linearly independent modulo~$p$, $v_p(n)$ denotes the $p$-adic valuation of $n$, and $\delta_2(p,n)$ is equal to 1 if $p=2$ and $p\mid n$, and is equal to 0 otherwise. 

Finally, assume, for each prime $p> (2g+1)^2$,
that $F(x,y)$ does not factor as $cM(x,y)^r$ modulo~$p$ for any integer  $r>1$ and any binary form $M(x,y)$ and constant $c$ over $\F_p$.
We will show in Subsection~\ref{posdens2} that such forms $F$ 
give a positive proportion of all classes of binary $n$-ic forms when ordered by height.

 \subsection{Construction of forms $G_j$ associated to each $F$}\label{gjconstruction2}

 Assume that
\begin{equation}\label{abcsplitg}
 F(x , y) \equiv m_{0} (x - a_{1}y)(x-a_{2}y) \ldots (x- a_{n}y)\, \, (\textrm{mod} \, \,   p_{1}),
 \end{equation}
 where $m_{0} \not \equiv 0$ (mod $p_{1}$), and $a_{1}$, \ldots, $a_{n}$ are distinct integers modulo $p_{1}$. 
 Similarly to the cubic case in Section \ref{proofofmain}, and following the method of Bombieri and Schmidt \cite{Bom}, for $i = 1, \ldots, n$, define
 \begin{eqnarray*}
 F_{a_{i}}(x , y) & : = & F(p_{1} x + a_{i} y, y).
 \end{eqnarray*}
The condition that $F$ has Galois group $S_n$ over $\Q$ again implies that $F$ must have trivial stabilizer in $\GL_2(\Q)$; thus, by the same argument as in the binary cubic case, the $n$ forms $F_{a_i}(x,y)$ are again seen to be pairwise inequivalent. 

By Lemma \ref{content}, the content of $F_{a_{i}}(x , y)$ is exactly divisible by $p_{1}$ for every $i=1, \ldots, n$.  Thus we may divide all the coefficients of the $F_{a_{i}}$'s by $p_{1}$ to obtain $n$ pairwise inequivalent binary $n$-ic forms $\tilde{F}_{a_{i}}$, for $i=1, \ldots, n$, with integral coefficients and content $1$. 
 By Lemma \ref{L7Bom}, each solution of the equation $F(x , y) = m$ corresponds uniquely to a solution of one of the equations
 $$
 \tilde{F}_{a_{i}}(x,y) = m/p_{1}, \, \, i=1, \ldots, n.
 $$
 
  Since we assumed $F(x , y)$ splits completely modulo $p_{1}$, we either have \eqref{abcsplitg} or
$$
F(x , y) \equiv m_{0} y (x - a_{2}y) \ldots  (x - a_{n}y) \, \, (\textrm{mod}\, \,  p_{1}),
$$
for some integers  $m_{0} \not \equiv 0$  (mod $p_{1}$),  and distinct $a_{2}, \ldots, a_{n}$ modulo $p_{1}$. In the latter case, we define $F_{a_{2}}$, \ldots, $F_{a_{n}}$ as above and we take $a_{1} = \infty$ and define
$$
F_{\infty}(x, y) := F (p_{1}y , x)
$$
and 
$$
\tilde{F}_{\infty}(x, y) := \frac{1}{p_{1}}\, F_{\infty}(x, y). 
$$
Just as in the binary cubic case, we see that $F_{\infty}(x, y)$ is not $\GL_2(\Z)$-equivalent to any of $F_{a_{i}}(x, y)$, for $i = 2, \ldots, n$. 

In the statement of Theorem ~\ref{maingeneral}, we order integral binary $n$-ic forms  by their naive heights, where the ({\it naive}) {\it height} of an integral binary $n$-ic form is defined as the maximum of the absolute values of all its coefficients. We now proceed to show that the height of each of the $\tilde{F}_{a_{i}}$'s can be bounded by a function of the height of $F$. Let 
$$
F(x , y) = f_{0}x^n + f_{1} x^{n-1} y + \ldots + f_{n}y^n \in \mathbb{Z}[x , y]
$$ 
be a binary form of degree $n$. Suppose that 
\begin{equation}\label{alessp}
F (a , 1) \equiv 0 \, \, \, (\textrm{mod}\, \, p), \,  \, \, \textrm{with}\, \, a \in \mathbb{Z}.
\end{equation}
 Put 
\begin{equation}\label{Faei}
F_{a}(x , y) =    F(p x + ay , y) =  e_{0}x^n + e_{1} x^{n-1} y + \ldots + e_{n}y^n.
\end{equation}
Then
\begin{equation}\label{dotss}
 e_{n-j} = p^j \sum_{i=0}^{n-j} f_{i} a^{n-i-j} {n-i \choose j}.
 \end{equation}
  If $j \geq 1$, clearly $e_{n-j}$ is divisible by $p$.  Since  $e_{n} = F(a , 1)$, by \eqref{alessp}, $e_{n}$ is also divisible by $p$, as is expected by Lemma ~\ref{content}. We also notice that  $e_{n-1} = p f'(a)$, where $f'(X)$ denotes the derivative of polynomial $f(X) = F(X , 1)$. Therefore, if $a$ is a simple root modulo $p$  then $f'(a)\not \equiv 0\, \, (\textrm{mod}\, p)$ and 
$$
\tilde{F_{a}}(x , y) = \frac{F_{a}(x , y)}{p}
$$
is congruent to 
\begin{equation}\label{yL}
y^{n-1} L(x , y)\, \,  (\textrm{mod}\, \, p),
\end{equation}
 where $L(x , y)= l_{1}x + l_{2}y$  is a linear  form modulo $p$, with $l_{1} \not \equiv 0\pmod p$.  Lemma \ref{newlemmaonlocalsol}, which will be proven in Subsection \ref{locsol2},  implies
 that $\tilde{F_{a}}(x , y) \equiv 1 \, \, (\textrm{mod}\, \, p)$ is solvable. 
 Further, we may choose the  integer $a$ so that 
\begin{equation}\label{pap}
-\frac{p-1}{2} \leq a \leq \frac{p-1}{2}.
\end{equation}
Let $H(F)$ be the naive  height of $F$. Then, by \eqref{dotss}, \eqref{Faei}, and \eqref{pap}, we have
\begin{eqnarray*}
\left|e_{n-j} \right| &\leq& p^{j} H(F) \left( \frac{p-1}{2}\right)^{n-j} \sum_{i=0}^{n-j} \left( \frac{p-1}{2}\right)^{-i} {n-i \choose j}\\
&\leq& p^{j} H(F) \left( \frac{p-1}{2}\right)^{n-j} \sum_{i=0}^{n-j}  {n-i \choose j}\\
& =  &   p^{j} H(F) \left( \frac{p-1}{2}\right)^{n-j} {n+1 \choose j+1}\\
& \leq &   p^{n} H(F){n+1 \choose j+1}.
\end{eqnarray*}
By Stirling's approximation, for every $j$, we have 
$$
{n+1 \choose j+1} \leq \frac{e\,  2^{n+2}}{2 \pi \sqrt{n+1}} < \frac{  2^{n+1}}{\sqrt{n+1}}.
$$
Therefore, we conclude that 
\begin{equation*}
H(F_{a}) < p^{n} \frac{  2^{n+1}}{\sqrt{n+1}} \,  H(F),
\end{equation*}
and consequently
\begin{equation}\label{Htilde}
H(\tilde{F_{a}}) < p^{n-1} \frac{  2^{n+1}}{\sqrt{n+1}} \,  H(F).
\end{equation}
Clearly,
$$
H(\tilde{F}_{\infty}) \leq p^{n-1} H(F),
$$
by definition.

Applying the reduction procedure for each prime $p_{i}$,  $i=1, \ldots , k$, we obtain $n^k$ mutually inequivalent binary forms of degree $n$ with integral coefficients, content $1$, and naive height bounded above by
$$
\left(\prod_{i=1}^{k} p_{i}\right)^{n-1}  \frac{2^{(n+1)k}}{(n+1)^{k/2}}\,   H(F).
$$
We denote these binary forms by $G_{j}$ for $j=1, \ldots, n^k$. 

If $F(x , y)$ is  irreducible over $\mathbb{Q}$, then  $G_{j} (x , y)$ will be irreducible over $\mathbb{Q}$ as well.
 Finally, the $G_{j}$'s are not constructed as proper subforms of  $F(x , y)$, and so the $G_j$'s must be maximal forms as well.  Indeed, they are maximal over $\Z_p$ for all $p\notin \{p_1,\ldots,p_k\}$ (being equivalent, up to a unit constant, to $F(x,y)$ over $\Z_p$ in that case), while for $p\in\{p_1,\ldots,p_k\}$, we have $p\nmid \disc(F)$, implying $p^{(n-1)(n-2)}||\disc(G_j)$, so $G_j$ cannot be a subform over $\Z_p$ of any form by equation (\ref{St6}). Hence the $G_j$'s are all maximal forms.  
 
  \subsection{Many of the $G_j$'s do not represent 1}\label{gjno12}
  
   By Theorem~\ref{maineq}, because of the condition on the discriminant of $F$, we conclude that the equation 
 $$
 F(x , y) = m
 $$
 has at most  $13 n$ primitive solutions if $n =3, 4$, and has at most  $11n $ primitive solutions if $n \geq 5$ (take $\epsilon = \frac{1}{4(n-1)}$).  
 
Each primitive solution $(x_{0}, y_{0})$ to the equation
$$
F(x , y) = m = \prod_{i=1}^{k} p_{i}
$$
corresponds to a unique triple  $(j, x_j, y_j)$ so that $(x_j, y_j)$ is a solution to the equation 
$$
G_{j}(x , y) = 1.
$$
Since the equation $F(x , y) = m$ has at most $13n  $ primitive solutions,   at least $n^{k} -  13n$ binary forms among the $G_{j}$'s cannot represent $1$, if $n = 3, 4$. Similarly, if $n \geq 5$, at least $n^{k} -  11n$ forms among the $G_{j}$'s cannot represent $1$.
  Let $n$ be even and assume that a solution $(x_{0}, y_{0})$ to $F(x , y) = m$ corresponds to a solution $(\tilde{x}_{0}, \tilde{y}_{0})$ of  $G_{I} = 1$ for a unique $I \in \{1, \ldots, n^k\}$. Clearly  the solution $(-x_{0}, -y_{0})$ to $F(x , y) = m$  will correspond to the  solution $(-\tilde{x}_{0}, -\tilde{y}_{0})$ of  $G_{I} = 1$. Hence, as in Theorem~\ref{maineq}, we may consider $(x_{0}, y_{0})$ and $(-x_{0}, -y_{0})$ as one solution.

  Thus starting from any maximal irreducible binary  form $F(x,y)$ of degree $n = 3$ or $4$ that splits completely modulo $p_1,p_2,\ldots,p_k$, we have produced at least $n^k - 13n$ mutually inequivalent maximal irreducible binary  forms $G(x,y)$  of degree $n$ that do not represent 1. 
 In order to produce a positive number of such forms $G(x,y)$, it suffices to take 
 $$
 k > \frac{\log(13n) }{\log n}.
 $$
 Similarly, for $n \geq 5$, we may choose 
 $$
 k > \frac{\log(11 n) }{\log n}
 $$
 to  produce at least $n^k - 11n$ maximal irreducible binary  forms $G(x,y)$  of degree $n$ that do not represent 1. 

 
  \subsection{The forms $G_j$ represent  $1$ everywhere locally}\label{locsol2}
  

Next we wish to show that these binary forms $G_{j}(x , y)$ that do not globally represent $1$ do represent $1$ everywhere locally.   

We first use the following simple lemma to show that the forms $G_{j}$'s represent $1$ modulo every prime 
$p< (2g + 1)^2$. 

\begin{lemma}\label{newlemmaonlocalsol}
Let $p$ be a prime and assume that $F(x , y)$ is a primitive binary $n$-ic form  such that  
$F(x , y) \equiv L_1(x,y) L_2(x , y)^{n-1} (\textrm{mod}\, \,  p^{1+v_p(n)+\delta_2(p,n)}),$
where $L_1(x,y)$ and $L_2(x,y)$ are linearly independent linear forms modulo $p$.
Then $F$ represents $1$ over~$\mathbb{Z}_p$.
\end{lemma}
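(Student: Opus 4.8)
The plan is to reduce, by a $\GL_2(\Z_p)$ change of variables, to the normalized situation $F(x,y)\equiv x\,y^{n-1}$ modulo the given power of $p$, and then to produce a solution of $F(x,1)=1$ directly by Hensel's lemma. The key structural input is that $L_1$ and $L_2$ are linearly independent modulo $p$: this is exactly what makes the normalization possible, and it is the \emph{simple} factor $L_1$ that will supply a nonvanishing derivative, just as the simple factor did in the cubic Lemma~\ref{localrep1}.

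First I would record that representing $1$ over $\Z_p$ is invariant under $\GL_2(\Z_p)$. Writing $\bigl(\begin{smallmatrix}L_1\\ L_2\end{smallmatrix}\bigr)=M\bigl(\begin{smallmatrix}x\\ y\end{smallmatrix}\bigr)$, the hypothesis that $L_1,L_2$ are linearly independent modulo $p$ says $\det M\in\Z_p^\times$, so $M\in\GL_2(\Z_p)$ and $M^{-1}\in\GL_2(\Z_p)$. Passing from $F$ to $F^{M^{-1}}$ preserves every congruence modulo a power of $p$, since $M^{-1}$ has $\Z_p$-entries, and it sends $L_1$ to the first coordinate and $L_2$ to the second; hence we may assume outright that
\[
F(x,y)\equiv x\,y^{n-1}\pmod{p^{\,e}},\qquad e:=1+v_p(n)+\delta_2(p,n).
\]
Because $F^{M^{-1}}$ represents $1$ over $\Z_p$ if and only if $F$ does, it suffices to treat this normalized form.

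Next I would restrict to the line $y=1$ and set $\phi(X):=F(X,1)\in\Z_p[X]$. The displayed congruence gives $\phi(X)\equiv X\pmod{p^{\,e}}$ as polynomials, whence $\phi(1)\equiv 1\pmod{p^{\,e}}$ and, differentiating, $\phi'(X)\equiv 1\pmod{p^{\,e}}$; in particular $\phi'(1)\in\Z_p^\times$. Applying Hensel's lemma to $\phi(X)-1$ at the approximate root $X=1$, where $|\phi(1)-1|_p\le p^{-e}<1=|\phi'(1)|_p^2$, produces $\hat x\in\Z_p$ with $\phi(\hat x)=1$, i.e.\ $F(\hat x,1)=1$. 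Thus $(\hat x,1)\in\Z_p^2$ represents $1$.

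The main (and rather mild) obstacle is really just the normalization step and checking that it carries the congruence along unchanged; once $F\equiv x\,y^{n-1}$ is in place, the Hensel inequality holds with enormous room to spare. Indeed, since $\phi'(1)$ is a \emph{unit}, the exponent $e\ge 1$ already suffices, so the stated power $p^{\,1+v_p(n)+\delta_2(p,n)}$ is more than adequate; note also that the usual $2$-adic difficulty in Hensel's lemma never arises here precisely because the relevant derivative is a unit rather than divisible by $p$. Geometrically this is the same mechanism as in Lemma~\ref{localrep1}: the simple factor $L_1$ of $L_1L_2^{n-1}$ marks a smooth point of the curve $z^n=F(x,y)$, which lifts to a $\Z_p$-point with unit $z$-coordinate, and the argument above simply carries out that lift by hand on the affine line $y=1$.
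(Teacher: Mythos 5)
Your proof is correct, but it takes a genuinely different route from the paper's. The paper never changes coordinates and never applies Hensel's lemma directly to prove this statement: writing $e=1+v_p(n)+\delta_2(p,n)$, it uses the linear independence of $L_1,L_2$ to solve the linear system $L_1(x_0,y_0)\equiv L_2(x_0,y_0)\equiv 1\pmod{p^{e}}$, so that $F(x_0,y_0)\equiv 1\pmod{p^{e}}$, and then invokes the structure of the $p$-adic unit group: a unit congruent to $1$ modulo $p^{1+v_p(n)+\delta_2(p,n)}$ is an $n$-th power $a^n$ in $\Z_p^\times$, whence $F(x_0/a,\,y_0/a)=1$. The exponent in the hypothesis is thus exactly what that $n$-th-power criterion consumes; all the $p\mid n$ and $p=2$ subtleties are absorbed there. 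Your argument---normalizing by $M\in\GL_2(\Z_p)$ so that $F\equiv xy^{n-1}\pmod{p^e}$ and running Hensel's lemma on $\phi(X)=F(X,1)$ at $X=1$, where $\phi'(1)$ is a unit because $L_1$ occurs to the first power---bypasses the unit-group computation entirely, and, as you note, it needs only the congruence modulo $p$ (i.e., $e=1$), so you actually prove a stronger statement, with the added feature that the representation has second coordinate equal to $1$. Each step of yours checks out: the congruence is preserved under composition with $M^{-1}$ since $M^{-1}$ has $\Z_p$-entries, differentiation preserves divisibility of coefficients by $p^e$, and the inequality $|\phi(1)-1|_p\le p^{-e}<1=|\phi'(1)|_p^2$ is exactly the Hensel hypothesis. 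What the paper's route buys is brevity, no change of variables, and a transparent explanation of why the exponent $1+v_p(n)+\delta_2(p,n)$ appears in the lemma (and reappears in the density computations of Subsection~\ref{posdens2}); what your route buys is robustness---the weaker mod-$p$ hypothesis suffices---and it makes explicit that the mechanism is the same unit-derivative/smooth-point lifting the paper uses elsewhere (as in Subsections~\ref{genh} and~\ref{locsol2}), rather than a separate fact about $n$-th powers of units.
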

\begin{proof}
We may find $(x_0,y_0)$ such that 
$$
L_1(x_0,y_0)\equiv L_2(x_0,y_0)\equiv 1 \, (\textrm{mod}\,  p^{1+v_p(n)+\delta_2(p,n)}); $$
indeed, since $L_1$ and $L_2$ are linearly independent forms modulo $p$, such $x_0,y_0\in\Z/p^{1+v_p(n)+\delta_2(p,n)}\Z$ exist. 
It follows that $F$ represents 1 modulo $p^{1+v_p(n)+\delta_2(p,n)}$.  Since a unit in $\Z_p$ is an $n$-th power if and only if it is an $n$-th power modulo $p^l$ for sufficiently large $l$ - in fact, $l=1+v_p(n)+\delta_2(p,n)$ suffices - we see that $F$ represents unit $n$-th powers and therefore 1 over $\Z_p$, as desired.
\end{proof}
For any prime number $p< (2g+1)^2$, we see from Lemma~\ref{newlemmaonlocalsol} that each $G_j$ represents 1 in $\Z_p$.  

By the Hasse--Weil bound,  the number of points $N$ on a curve $C$ of genus~$g$ over the finite field $\mathbb{F}_{q}$ of order $q$ satisfies the inequality
$$
\left| N - (q+1) \right| \leq 2g \sqrt{q}.
$$
For $p> (2g+1)^2$, we assumed that the forms $F$, and thus the forms $G_j$, 
do not factor as $c\, M(x,y)^r$ modulo~$p$ for any $r >1$ and any binary form $M(x,y)$ and constant $c$ over $\F_p$; it follows from this assumption that the curves $z^n=F(x,y)$ are irreducible over $\F_p$. 
The Hasse-Weil bound thus applies to each such curve $z^n=F(x,y)$ to produce a smooth point. Therefore by Hensel's lemma,  such a smooth $\mathbb{F}_p$-point will lift to a $\mathbb{Z}_p$-point. We conclude that the forms $G_{j}$ that we have constructed all represent $1$ in $\Z_p$ for primes $p> (2g+1)^2$ as well. 

Finally, the $G_j$'s continue to represent positive values and therefore also represent 1 over $\R$. In summary, we have proven the following theorem.
 
\begin{thm}\label{mainconditiongeneral}
Let $n \geq 3$  be an integer and let $p_{i}$ be the $i$-th  prime greater than or equal to~$n$. Set 
 $m = \prod_{i=1}^k p_{i}$ where $k$ is any integer satisfying
  \[ \left\{
  \begin{array}{l l}
   k>  \frac{\log(13n ) }{\log n} & \quad \text{if} \, \, n = 3, 4\\
  k >  \frac{\log(11n ) }{\log n} & \quad \text{if}\, \,  n \geq 5.
  \end{array} 
 \right.  \]
  Let $F(x , y)$ be a primitive maximal irreducible integral binary $n$-ic form with~Galois group $S_n$ and discriminant greater than 
 $(3.5)^{2n(n-1)} n^{2n} m^{4(n-1)}$.
 Assume further that $F(x , y)$ has positive first coefficient and splits completely modulo the primes $p_{1}, \ldots, p_{k}$.
 Let $g=(n-1)(n-2)/2$ be the genus of the curve $z^n = F(x , y)$ and  assume that for every prime $p$, with  $p < p_{1}$ or $p_{k}< p <  (2g+1)^2$, we have
$$
F(x , y) \equiv L_1(x,y) L_2(x , y)^{n-1}\, \, (\textrm{mod}\, \,  p^{1+v_p(n)+\delta_2(p,n)}),
$$
where $L_1(x , y)$ and $L_2(x,y)$ are linearly independent linear  forms modulo~$p$. 
Moreover, assume that for any prime $p >  (2g+1)^2$, 
the form $F(x,y)$ does not factor as $c\, M(x,y)^r$ modulo~$p$ for any $r>1$ and any binary form $M(x,y)$ and constant $c$ over $\F_p$.

Given any such form $F$, there exist $n^k$ mutually inequivalent irreducible maximal binary $n$-ic forms $G_{j}$, for $j =1, \ldots, n^k$, such that  at least
  \[ \left\{
  \begin{array}{l l}
   n^k-13 n & \quad \text{if} \, \, n = 3, 4\\
  n^k-11 n & \quad \text{if}\, \,  n \geq 5
  \end{array} 
 \right.  \]
 of the $G_j$'s satisfy the following:
\begin{enumerate}[{\rm (i)}]
\item they locally everywhere represent $1;$
\item  they globally do not represent $1;$ and 
\item  they are maximal forms.
\end{enumerate}
\end{thm}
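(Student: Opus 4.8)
The plan is to assemble the three ingredients established in Subsections~\ref{gjconstruction2}, \ref{gjno12}, and~\ref{locsol2}: the Bombieri--Schmidt reduction that manufactures the forms $G_j$, the counting bound coming from Theorem~\ref{maineq}, and the verification of local solubility at every place. The statement is essentially a packaging of these subsections, so the work is to check that the hypotheses imposed on $F$ feed correctly into each piece.

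First I would carry out the prime-by-prime reduction. Since $F$ splits completely modulo each $p_i$, I apply the substitutions $F_a(x,y)=F(p_i x+ay,\,y)$ (together with $F_\infty(x,y)=F(p_i y,x)$ at the root $a=\infty$) as $a$ ranges over the $n$ residues of $F(\,\cdot\,,1)$ modulo $p_i$. By Lemma~\ref{content} the content of each $F_a$ is exactly divisible by $p_i$, so dividing through yields primitive integral forms $\tilde F_a$, and by Lemma~\ref{L7Bom} the solutions of $F=m$ are partitioned bijectively among the equations $\tilde F_a=m/p_i$. Iterating over $p_1,\ldots,p_k$ produces $n^k$ forms $G_j$. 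Their mutual inequivalence follows because the $S_n$ Galois hypothesis forces $F$ to have trivial stabilizer in $\GL_2(\Q)$, so the reduction matrices attached to distinct roots at distinct primes yield genuinely distinct $\GL_2(\Z)$-classes, exactly as in the cubic case; irreducibility of each $G_j$ is inherited from $F$; and maximality follows from $v_{p_i}(\disc(G_j))=(n-1)(n-2)$ together with the transformation law~\eqref{St6}, since the $G_j$ are $\GL_2(\Z_p)$-equivalent to $F$ up to a unit for all other~$p$.

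Second comes the counting. I apply Theorem~\ref{maineq} with $\epsilon=\frac{1}{4(n-1)}$; a direct check shows that the discriminant lower bound $|\disc(F)|>(3.5)^{2n(n-1)}n^{2n}m^{4(n-1)}$ is precisely what is needed for the hypothesis $0<m\leq |D|^{1/(2(n-1))-\epsilon}/((3.5)^{n/2}n^{n/(2(n-1))})$, so $F(x,y)=m$ has at most $13n$ primitive solutions when $n=3,4$ and at most $11n$ when $n\geq 5$. Because each such solution corresponds via Lemma~\ref{L7Bom} to a solution of exactly one equation $G_j(x,y)=1$, at most $13n$ (respectively $11n$) of the $G_j$ can represent~$1$, so at least $n^k-13n$ (respectively $n^k-11n$) of them do not; for even $n$ the identification of $(x_0,y_0)$ with $(-x_0,-y_0)$ is consistent with the convention in Theorem~\ref{maineq}.

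Third is local solubility, which I would handle place by place. Over $\R$, the positive leading coefficient forces $G_j$ to take positive values, hence~$1$. At each $p_i$, equation~\eqref{yL} shows $G_j\equiv y^{\,n-1}L(x,y)\pmod{p_i}$ with $L$ linear and $\ell_1\not\equiv0$, so Lemma~\ref{newlemmaonlocalsol} applies. For $p\notin\{p_1,\ldots,p_k\}$ with $p<(2g+1)^2$, the imposed factorization $F\equiv L_1L_2^{\,n-1}\pmod{p^{1+v_p(n)+\delta_2(p,n)}}$, inherited by $G_j$ up to a unit, again invokes Lemma~\ref{newlemmaonlocalsol}; for $p>(2g+1)^2$, the no-perfect-power condition guarantees that $z^n=F(x,y)$ is geometrically irreducible, so the Hasse--Weil bound supplies a smooth $\F_p$-point with $z\neq0$, which Hensel lifts to a $\Z_p$-point with unit $z$, giving a unit $n$-th power and hence~$1$. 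I expect the genuine obstacle to lie here, at the intermediate primes $p<(2g+1)^2$ not dividing $m$: there the Hasse--Weil bound is too weak to force a point, so one must impose the explicit factorization type and---crucially---to the sharp precision $p^{1+v_p(n)+\delta_2(p,n)}$ dictated by Hensel's criterion for $n$-th powers (including the correction $\delta_2$ when $2=p\mid n$). Confirming that these congruence conditions simultaneously guarantee representability of~$1$ via Lemma~\ref{newlemmaonlocalsol} and remain compatible with all the other constraints on $F$ is the delicate step; the reduction and counting arguments are then routine bookkeeping.
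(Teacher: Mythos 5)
Your proposal is correct and takes essentially the same approach as the paper: the paper proves Theorem~\ref{mainconditiongeneral} precisely by assembling Subsections~\ref{gjconstruction2}, \ref{gjno12}, and~\ref{locsol2} --- the Bombieri--Schmidt reduction producing $n^k$ mutually inequivalent maximal forms $G_j$, the count via Theorem~\ref{maineq} with $\epsilon=\frac{1}{4(n-1)}$ giving at most $13n$ (resp.\ $11n$) primitive solutions of $F=m$, and local solubility handled prime by prime via equation~\eqref{yL}, Lemma~\ref{newlemmaonlocalsol}, and the Hasse--Weil bound with Hensel lifting --- exactly as you describe. The points you flag as delicate (inheritance of the factorization conditions by the $G_j$ up to units, and the precision $p^{1+v_p(n)+\delta_2(p,n)}$ needed so that a unit congruent to $1$ is an $n$-th power) are treated in the paper in just the way you anticipate.
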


 \subsection{The forms $F$ and also the forms $G_j$ yield a positive proportion of all integral binary $n$-ic forms}\label{posdens2}

We now verify that the forms~$F$ satisfying the conditions of 
Theorem~\ref{mainconditiongeneral} give a positive proportion of all integral binary forms of degree $n$ when ordered by naive height.

\begin{thm}\label{dav2} 
Let $S$ be a set of integral binary $n$-ic forms defined by congruence conditions modulo bounded powers of primes $p$ where, for sufficiently large $p$, the defining congruence conditions at $p$ exclude only a set that modulo $p$ is contained in the set of $\F_p$-points of a fixed codimension $\geq 2$ subscheme of~$\mathbb A^{n+1}_\Z$.
Let $N(S;X)$ denote the number of irreducible binary $n$-ic forms with Galois group $S_n$ that are contained in $S$ and have positive leading coefficient and naive height less than~$X$.  Then:
\begin{enumerate}[{\rm (a)}]
\item $N(S;X) \sim 2^nX^{n+1} \prod_p\mu_p(S);$
\item  $N(S;X) \sim 2^nX^{n+1}\prod_p\mu_p(S),$
\end{enumerate}
where $\mu_p(S)$ denotes the $p$-adic density of $S$ in the space of integral binary~$n$-ic forms. 
\end{thm}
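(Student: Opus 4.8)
The plan is to prove Theorem~\ref{dav2} by a geometry-of-numbers count of lattice points in an expanding box, combined with a sieve to handle the infinitely many congruence conditions and a thin-set estimate to restrict to irreducible forms with full Galois group. Since the two parts (a) and (b) have identical leading terms (they record the two cases of Theorem~\ref{dav}, which coincide once forms are ordered by height rather than by discriminant), it suffices to establish the single asymptotic $N(S;X)\sim 2^nX^{n+1}\prod_p\mu_p(S)$. Note that here we are counting \emph{forms}, not $\GL_2(\Z)$-classes, so no reduction theory or fundamental-domain computation is needed; the region to be counted is simply a box.

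First I would identify a binary $n$-ic form $f_0x^n+\cdots+f_ny^n$ with the lattice point $(f_0,\ldots,f_n)\in\Z^{n+1}$, so that the forms of naive height less than $X$ with positive leading coefficient fill the box $B_X=(0,X)\times(-X,X)^n$ of volume $2^nX^{n+1}$. For a single congruence condition modulo a fixed modulus $M=\prod_{p\le Y}p^{e_p}$, a standard lattice-point count (Davenport's lemma on the number of lattice points in a bounded, sufficiently regular region) gives that the number of forms in $B_X$ lying in $S$ modulo $M$ equals $\bigl(\prod_{p\le Y}\mu_p(S)\bigr)\cdot 2^nX^{n+1}+O_M(X^n)$, the error arising from the boundary of the box. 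This handles the small primes and simultaneously shows that the Euler product converges: for large $p$ the excluded locus has codimension $\ge 2$ in $\A^{n+1}_\Z$, so $\mu_p(S)=1+O(1/p^2)$.

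The main obstacle is the uniform control of the tail: I must show that forms in $B_X$ failing the defining congruence at some prime $p>Y$ contribute $o(X^{n+1})$ once $Y$ is large, uniformly in $X$. This is precisely the setting of the geometric (Ekedahl--Bhargava) sieve, and it is the technical crux. For the medium range $Y<p\le X$ the estimate is elementary: the excluded locus is cut out modulo $p$ by a subscheme $Z\subset\A^{n+1}_\Z$ with $\dim Z\le n-1$, so $|Z(\F_p)|=O(p^{n-1})$, and since each residue class modulo $p$ meets $B_X$ in $O((X/p)^{n+1})$ points, the number of forms reducing into $Z$ modulo $p$ is $O(X^{n+1}/p^2)$; summing over $p>Y$ gives $O(X^{n+1}/(Y\log Y))$. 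The genuinely delicate part is the large range $p>X$, where the naive per-class count fails; here one invokes the geometric sieve, whose whole point is that the codimension-$\ge 2$ hypothesis suffices to bound this contribution by $o(X^{n+1})$ as well. Letting $X\to\infty$ and then $Y\to\infty$ yields the asymptotic $2^nX^{n+1}\prod_p\mu_p(S)$ for the count over \emph{all} forms in $S$, before imposing irreducibility or the Galois condition.

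Finally I would remove the reducible forms and those whose Galois group is a proper subgroup of $S_n$. The reducible binary $n$-ic forms lie on a proper closed subvariety of $\A^{n+1}$ and so number $O(X^n)$; the forms with Galois group strictly smaller than $S_n$ constitute a thin set, and by van der Waerden's theorem together with its quantitative refinements (the height-ordered analogue for binary $n$-ic forms of the cubic result \cite{BY} invoked in Subsection~\ref{posdens}) they number $o(X^{n+1})$. Hence $100\%$ of the forms counted in the previous paragraph are irreducible with full Galois group $S_n$, and subtracting this negligible exceptional set leaves $N(S;X)\sim 2^nX^{n+1}\prod_p\mu_p(S)$, as claimed. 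The uniform large-prime tail estimate remains the main hurdle, since it is what both forces the Euler product to the correct constant and licenses the passage from finitely many to infinitely many congruence conditions.
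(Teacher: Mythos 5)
Your proposal is correct and takes essentially the same route as the paper, whose proof consists of exactly your two ingredients: Hilbert's irreducibility theorem to discard forms whose Galois group is not $S_n$, and Ekedahl's geometric sieve (\cite{Eke}, \cite{PS}, \cite{GeoSieve}) to pass from finitely many congruence conditions to the full Euler product, with your write-up merely making the standard box-count, medium-prime, and large-prime tail estimates explicit. One minor correction: the forms reducible over $\Q$ are Zariski dense in $\A^{n+1}$ (a generic form factors over $\overline{\Q}$), so they do not lie on a proper closed subvariety; but they do form a thin set---indeed their Galois group acts non-transitively on the roots and hence is a proper subgroup of $S_n$---so the $o(X^{n+1})$ bound you need is already covered by the same thin-set estimate you invoke for the Galois group condition.
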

\begin{proof}
Note that $100\%$ of integral binary~$n$-ic forms, when ordered by naive height, are irreducible and have Galois group $S_n$ by Hilbert's irreducibility theorem. The theorem is then obtained as an immediate consequence of Ekedahl's sieve (see, e.g., \cite{Eke}, \cite{PS}, or \cite{GeoSieve} for expositions). 
\end{proof}

Theorem~\ref{dav2}
 states that the number of irreducible binary $n$-ic forms having Galois group $S_n$ and naive height less than~$X$ grows as a constant times~$X^{n+1}$, even if we are counting only those forms satisfying a finite, or even a suitable infinite, set of congruence conditions.  
 In particular, Theorem~\ref{dav2} immediately implies that the set $S_0$ of all $F$ satisfying the conditions of Theorem~\ref{mainconditiongeneral} gives a positive proportion of all integral binary $n$-ic forms, when these forms are ordered by naive height. 
 
 Indeed,  we may determine the exact density of such $F$ among all irreducible binary $n$-ic forms as follows. 
Again, if a form is non-maximal, then either it is not primitive, or after an $\SL_2(\Z)$-transformation it is of the form $a_0x^n+a_1x^{n-1}y+\cdots+a_ny^n$, where $p^i\mid a_i$, $i=1,\ldots,n$, for some prime~$p$.  In particular, integral binary $n$-ic forms that are non-maximal must factor modulo some prime $p$ as a constant times the $n$-th power of a linear form.  Thus for primes $p> (2g +1)^2$, the $p$-adic density of binary $n$-ic forms  that are primitive and not constant multiples 
of  $r$-th powers of binary forms modulo~$p$ for any $r>1$ (and thus are automatically maximal over $\Z_p$) is
\begin{equation}\label{mup}
\Upsilon_p(S_0)=\sum_{r\mid n}\mu(r)\frac{p^{n/r+1}-1}{p^{n+1}},
\end{equation}
by inclusion--exclusion; here the $r$-th term corresponds to the density of binary $n$-ic forms that are constant multiples modulo $p$ of exact $r$-th powers. 

The~$p_{i}$-adic density (for $i=1, \ldots, k$) of all primitive forms that split completely modulo $p_{i}$ (and thus are automatically maximal over $\Z_{p_i}$) is
$$
\frac{{p_i+1 \choose n} (p_i-1)   }{p_{i}^{n+1}}.
$$

The $p$-adic density of  primitive binary $n$-ic forms of the shape
\begin{equation}\label{modhigherp}
 L_1(x,y) L_2(x , y)^{n-1}\, \, (\textrm{mod} \, \,  p^l)
\end{equation}
where $L_1(x,y)$ and $L_2(x,y)$ are linearly independent linear forms modulo $p$ and 
$l=1+v_p(n)+\delta_2(p,n)$ (such forms are again automatically maximal over $\Z_p$) is 
$$
\frac{(p+1)p(p-1)p^{3(l-1)}}{p^{l(n+1)}}.
$$

Therefore, the density of $F$ satisfying the conditions of Theorem~\ref{mainconditiongeneral} among all classes of irreducible integral binary  forms of degree $n$, when these forms are ordered by height, is at least
$$\frac12\cdot
\prod_{i=1}^{k} \frac{{p_{i}+1 \choose n} (p_{i}-1)   }{p_{i}^{n+1}} \prod_{ \substack{p\notin\{p_1,\ldots,p_k\}
\\ p < (2g + 1)^2} }\frac{(p^3-p)p^{3(l-1)}}{p^{l(n+1)}} \prod_{p >(2g + 1)^2} \Upsilon_p,
 $$
 where $l=1+v_p(n)+\delta_2(p,n)$ depends on $p$, $\Upsilon_p$ is given as in \eqref{mup}, and  $g=(n-1)(n-2)/2$ is the genus of the curve $z^n = F(x , y)$.

 In order to show that the mutually inequivalent forms $G_{j}$ that we constructed from these forms~$F$, which are maximal, locally represent $1$, but do not globally represent 1, yield a positive proportion of all integral binary $n$-ic forms when ordered by height, we note that each $F$ gives rise to $n^k - 13n$ forms $G_{j}$ (if $n =3, 4$), each satisfying 
 $$
 H(G_{j}) < \left(\prod_{i=1}^{k} p_{i}\right)^{n-1}  \frac{2^{(n+1)k}}{(n+1)^{k/2}}\,   H(F),
 $$ 
by \eqref{Htilde}.
We conclude that the density of our forms $G_{j}$, when ordered by height, among all integral binary $n$-ic forms is at least 
$$
(n^k -13n) \frac{(n+1)^{k/2}}{2^{(n+1)k+1}}
 \prod_{i=1}^{k} \frac{{p_{i}+1 \choose n} (p_{i}-1)   }{p_{i}^{2n}} \prod_{ \substack{p\notin\{p_1,\ldots,p_k\}
\\ p < (2g + 1)^2} }\frac{(p^3-p)p^{3(l-1)}}{p^{l(n+1)}} \prod_{p > (2g+1)^2} \Upsilon_p,
 $$
   where again $l=1+v_p(n)+\delta_2(p,n)$ depends on $p$, $\Upsilon_p$ is given in \eqref{mup} and  $g=(n-1)(n-2)/2$ is the genus of the curve $z^n = F(x , y)$, and $n =3, 4$.

Similarly, if $n\geq 5$, we conclude that the density of our forms $G_{j}$, when ordered by height, among all integral binary $n$-ic forms is at least 
$$
(n^k -11n) \frac{(n+1)^{k/2}}{2^{(n+1)k+1}}
\prod_{i=1}^{k} \frac{{p_{i}+1 \choose n} (p_{i}-1)   }{p_{i}^{2n}} \prod_{ \substack{p\notin\{p_1,\ldots,p_k\}
\\ p < (2g + 1)^2} }\frac{(p^3-p)p^{3(l-1)}}{p^{l(n+1)}}\prod_{p > (2g+1)^2} \Upsilon_p.
 $$

\subsection{The case of general $h$}\label{genh2}

We may handle general $h\in\Z$ in a similar manner. 
Let $k$ be a positive integer 
 satisfying
  \[ \left\{
  \begin{array}{l l}
  k>  \frac{\log(13n ) }{\log n} & \quad \text{if} \, \, n = 3, 4\\
   k >  \frac{\log(11n ) }{\log n} & \quad \text{if}\, \,  n \geq 5.
  \end{array} 
 \right.  \]
Let $p_{1}, \ldots, p_{k}$ be the $k$ smallest  primes that are greater than or equal to $n$ such that $p_{i} \nmid h$, and 
set  $m = \prod_{i=1}^{k} p_{i}$.

 Let $F(x , y)$ be an irreducible n-ic binary form,
 with 
 $$
 |\textrm{disc}(F)| > (3.5)^{2n(n-1)} n^{2n} \left( h m \right)^{4(n-1)}.
 $$
  Then by Theorem \ref{maineq}, the equation  
  \begin{equation}\label{Fhm1}
  F(x , y)= h m
  \end{equation}
  has at most $13n$  primitive solutions if $n = 3, 4$, and has at most $11 n$ primitive solutions if $n \geq 5$. 
  
  In addition to the assumptions on the form $F$ that were made in Subsection~\ref{fconstruction2}, we also assume    that for every prime $p \mid h$, we have 
  $$
F(x , y) \equiv L_1(x,y) L_2(x , y)^{n-1}\, \, \textrm{ (mod}\, \,  p),
$$
where $L_1(x,y)$ and $L_2(x,y)$ are linear forms over $\F_p$ that are linearly independent. 

  Similarly to the case $h=1$, performing the construction in Lemmas~\ref{L7Bom} and \ref{content} as in Subsection~\ref{gjconstruction2} using the primes $p_1,\ldots,p_k$, we obtain  $n^k$ mutually inequivalent degree-$n$ Thue equations of the shape
   \begin{equation}\label{Ghm1}
  G_{j}(x , y) = h,
  \end{equation}
  so that every solution $(x_{0}, y_{0})$ to the equation \eqref{Fhm1} corresponds to a solution of one of the equations in \eqref{Ghm1}, for $j \in \{1, \ldots, n^k\}$.  
  
The extra assumption on the factorization of $F$ modulo prime factors of~$h$ again ensures, as in Subsection~\ref{genh} in the case $n=3$, that the equations
$F(x , y) = 0$
and 
$G_{j}(x , y) = 0$
are solvable over $\mathbb{Z}_{p}$ for all primes $p\mid h$ as well. 


 The rest of the proof is identical to the above proof for $h=1$.

 \section*{Acknowledgments}
  
    We are grateful to the Mathematisches Forschungsinstitut Oberwolfach, where this work was initiated in July  2011.
  We also thank T.\ Browning, J.-L.\ Colliot-Th\'el\`ene, K.\ Gy\H{o}ry, D.\ Schindler, A.\ Shankar, X.\ Wang, and the anonymous referees for helpful conversations and for comments on an earlier draft of this manuscript.  The first author was partially supported by NSF grant  DMS-1601837. 
  The second author was partially supported by a Simons Investigator Grant and NSF Grant DMS-1001828.
 


\end{document}